\DeclareMathOperator{\mob}{Mob_{gp}}
\DeclareMathOperator{\gp}{gp}
\DeclareMathOperator{\cp}{\,\square\,}
\newcommand{\gpo}{\gp_{\rm o}}
\newtheorem{theorem}{Theorem}[section]
\newtheorem{lemma}[theorem]{Lemma}
\newtheorem{corollary}[theorem]{Corollary}
\newtheorem{proposition}[theorem]{Proposition}
\theoremstyle{definition}
\newcommand{\move}{\rightsquigarrow}
\pgfplotsset{compat=1.18}
\begin{document}
	
	\title{Moving through Cartesian products, coronas and joins in general position}
	\author{Sandi Klav\v{z}ar $^{a,b,c}$ \\ \texttt{\footnotesize sandi.klavzar@fmf.uni-lj.si}  \\
		{\footnotesize ORCID: 0000-0002-1556-4744}
		\and
		Aditi Krishnakumar $^{d}$ \\ \texttt{\footnotesize aditikrishnakumar@gmail.com} \\
		{\footnotesize ORCID: 0000-0002-0954-7542}
		\and
		Dorota Kuziak $^{e}$ \\ \texttt{\footnotesize dorota.kuziak@uca.es} \\ 
		{\footnotesize ORCID: 
			0000-0001-9660-3284}
		\and
		Ethan Shallcross $^{d}$ \\ \texttt{\footnotesize ershallcross@gmail.com} \\ 
		{\footnotesize ORCID: 
			0009-0000-5730-6834}
		\and 
		James Tuite $^{d,f}$ \\ \texttt{\footnotesize james.t.tuite@open.ac.uk} \\ 
		{\footnotesize ORCID: 
			0000-0003-2604-7491}
		\and 
		Ismael G. Yero $^{g}$ \\ \texttt{\footnotesize ismael.gonzalez@uca.es} \\
		{\footnotesize ORCID: 0000-0002-1619-1572}
	}
	
	\maketitle
	
	\begin{center}
		$^a$ Faculty of Mathematics and Physics, University of Ljubljana, Slovenia\\
		
		$^b$ Institute of Mathematics, Physics and Mechanics, Ljubljana, Slovenia\\
		
		$^c$ Faculty of Natural Sciences and Mathematics, University of Maribor, Slovenia\\
		
		$^d$ School of Mathematics and Statistics, Open University, Milton Keynes, UK\\
		
		$^e$ Departamento de Estad\'istica e IO, Universidad de C\'adiz, Algeciras Campus, Spain\\
		
		$^f$ Department of Informatics and Statistics, Klaip\.{e}da University, Lithuania\\
		
		$^g$ Departamento de Matem\'aticas, Universidad de C\'adiz, Algeciras Campus, Spain
	\end{center}
	
	\begin{abstract}
		The general position problem asks for large sets of vertices such that no three vertices of the set lie on a common shortest path. Recently a dynamic version of this problem was defined, called the \emph{mobile general position problem}, in which a collection of robots must visit all the vertices of the graph whilst remaining in general position. In this paper we investigate this problem in the context of Cartesian products, corona products and joins, giving upper and lower bounds for general graphs and exact values for families including grids, cylinders, Hamming graphs and prisms of trees.
	\end{abstract}
	
	\noindent
	{\bf Keywords:} general position set; mobile general position set; mobile general position number; robot navigation; Cartesian product graph
	
	\noindent
	AMS Subj.\ Class.\ (2020): 05C12, 05C69, 05C76
	
	\section{Introduction}\label{sec:intro}
	
	The \emph{general position problem} originated as a geometric puzzle of Dudeney~\cite{dudeney-1917}, but was first investigated in the context of graph theory in~\cite{ullas-2016} and~\cite{manuel-2018}. A survey of the problem is given in~\cite{survey}. The arXiv version~\cite{manuel-2017arxiv} of the paper~\cite{manuel-2018} gave the following motivation for the problem. Suppose that a collection of robots is stationed on the vertices of a graph. They communicate with each other by sending signals along shortest paths. To avoid their communication being disrupted, we wish that no robot lies on a shortest path between two other robots. Subject to this condition, what is the greatest possible number of robots that we can place on the graph? In fact, the related \emph{mutual-visibility problem} was initially researched in terms of its applications in robotic navigation and communication (see~\cite{aljohani-2018a,aljohani-2018b,diluna-2017} for a partial overview) and was only recently considered in a pure mathematics context~\cite{DiStef}.
	
	However, this picture lacks an important feature of real world robotic navigation: the general position problem is `static', whereas in applications the robots will typically need to move around the network. Watching the mobile delivery robots created by Starship Technologies\textregistered\ \cite{starship} inspired the authors of~\cite{klavzar-2023} to consider a dynamic version of the general position problem, in which robots move through the vertices of a graph whilst remaining in general position, and such that every vertex is visited by a robot at least once (it is assumed that one robot moves to an adjacent vertex at each step and no vertex can contain more than one robot). The paper~\cite{klavzar-2023} considered this problem for block graphs, rooted products, Kneser graphs, unicyclic graphs, complete multipartite graphs and line graphs of complete graphs. A mobile version of the closely related mutual-visibility problem was recently treated in~\cite{Dettlaf2024}. In addition, the paper~\cite{Boshar2024} considers the version of the mobile general position problem with the stricter condition that every vertex must be visited by every robot.
	
	The general position problem has been investigated for a wide variety of graphs, but there is a particularly extensive literature on general position sets in Cartesian products, see~\cite{klavzar-2021,KlavzarRus,KorzeVesel,tian-2021a,tian-2021b}. General position sets in other graph products were discussed in~\cite{ghorbani-2021}. Cartesian products have also been considered in the setting of variants of the general position number, such as the lower general position number~\cite{Welton}, the mutual-visibility number~\cite{Cicerone2023,DiStef}, the monophonic position number~\cite{mpproduct}, the lower mutual-visibility number~\cite{lowermv}, general position polynomials~\cite{polynomial}, edge general position numbers~\cite{edgeposition}, total mutual-visibility~\cite{Kuziak2023}, the variety of general position problems~\cite{tian-2024+} and general position and mutual-visibility colourings~\cite{ChaDiSreeThoTui2024+,klavzar-2025+}. In this paper, we examine the mobile general position problem in Cartesian products, together with the coronas and joins of graphs.
	
	The plan of the paper is as follows. In Subsection~\ref{subsec-def} we introduce the formal definitions of the main concepts that we shall use in our exposition. In Section~\ref{sec:Cartesian} we give bounds for the mobile general position number of Cartesian products and discuss their sharpness. In Section~\ref{sec:prisms} we determine this number for Cartesian products involving paths, including grids, some cylinders and prisms of trees. Section~\ref{sec:corona+joins} discusses the mobile general position problem for corona products and joins of graphs. We conclude with some open problems in Section~\ref{sec:conclude}.
	
	\subsection{Formal definitions and preliminaries} \label{subsec-def}
	
	We will write $[n]$ for $\{ 1,\dots, n\} $ and $[m,n]$ for $\{ m,m+1,\dots ,n-1,n\} $ when $m \leq n$. A \emph{graph} $G = (V(G),E(G))$ consists of a set $V(G)$ of vertices that are connected by a set of edges $E(G)$. All graphs that we consider are simple and undirected. We will write $u \sim v$ to indicate that $u$ and $v$ are adjacent in $G$, and will denote the set of neighbours of $u$ by $N_G(u)$, or simply by $N(u)$ if the graph is clear from the context. The \emph{degree} $\deg (u)$ of a vertex $u$ is $|N(u)|$. A vertex of degree one is a \emph{leaf}, and the number of leaves in a graph $G$ will be denoted by $\ell (G)$. A complete graph with $n$ vertices will be written as $K_n$ and a complete bipartite graph with partite sets of size $n$ and $m$ as $K_{n,m}$. 
	
	A \emph{path} $P_{r}$ in $G$ is a sequence $u_1,u_2,\dots ,u_r$ of distinct vertices such that $u_i \sim u_{i+1}$ for $i\in [r-1]$ and the \emph{length} of this path is $r-1$. A \emph{cycle} $C_r$ is a sequence $u_1,u_2,\dots ,u_r$ such that $u_i \sim u_{i+1}$ for $i \in [r-1]$ and also $u_1 \sim u_r$. The \emph{distance} between vertices $u,v \in V(G)$ is the length of a shortest $u,v$-path in $G$. We will typically identify the vertices of a cycle $C_n$ with $\mathbb{Z}_n$ and the vertices of a path $P_n$ with $[n]$ in the natural manner. A subgraph $X$ of $G$ is \emph{convex} if for any $u,v \in V(X)$ we have $V(P) \subseteq V(X)$ for any shortest $u,v$-path $P$ in $G$. The subgraph $G[X]$ induced by a subset $X \subseteq V(G)$ is the subgraph with vertex set $X$ such that $u,v \in X$ are adjacent in $G[X]$ if and only if they are adjacent in $G$.
	
	If $S \subseteq V(G)$ has the property that no three vertices of $S$ lie on a common shortest path of $G$, then we say that $S$ is a \emph{general position set} of $G$. The largest possible number of vertices in a general position set of $G$ is the \emph{general position number} of $G$, denoted by $\gp (G)$. Any largest general position set is referred to as a \emph{gp-set}. Fig.~\ref{fig:Petersen} shows two general position sets in the well-known Petersen graph. The red vertices on the left form a maximal but non-optimal general position set of order four, whilst the six red vertices on the right constitute a $\gp $-set of the Petersen graph, and so the Petersen graph has general position number six. Note that in both cases no shortest path between any pair of red vertices passes through a third red vertex.
	
	\begin{figure}[ht!]
		\centering
		\begin{tikzpicture}[scale=0.4, vertex_style/.style={circle, ball color=black},vertex_style_1/.style={circle, ball color=red},
			edge_style/.style={thick, black,drop shadow={opacity=0.4}}]
			
			\begin{scope}[rotate=90]
				\foreach \x/\y in {216/4,288/5}{
					\node[vertex_style_1] (\y) at (canvas polar cs: radius=2.5cm,angle=\x){};}
				\foreach \x/\y in {0/1,72/2,144/3}{
					\node[vertex_style] (\y) at (canvas polar cs: radius=2.5cm,angle=\x){};}
				\foreach \x/\y in {0/6,144/8}{
					\node[vertex_style_1] (\y) at (canvas polar cs: radius=5cm,angle=\x){};}
				\foreach \x/\y in {72/7,216/9,288/10}{
					\node[vertex_style] (\y) at (canvas polar cs: radius=5cm,angle=\x){};}
			\end{scope}
			
			\foreach \x/\y in {1/6,2/7,3/8,4/9,5/10}{
				\draw[edge_style] (\x) -- (\y);}
			
			\foreach \x/\y in {1/3,2/4,3/5,4/1,5/2}{
				\draw[edge_style] (\x) -- (\y);}
			
			\foreach \x/\y in {6/7,7/8,8/9,9/10,10/6}{
				\draw[edge_style] (\x) -- (\y);}
		\end{tikzpicture}
		\hspace*{1.5cm}
		\begin{tikzpicture}[scale=0.4, vertex_style/.style={circle, ball color=black},vertex_style_1/.style={circle, ball color=red},
			edge_style/.style={thick, black,drop shadow={opacity=0.4}}]
			
			\begin{scope}[rotate=90]
				\foreach \x/\y in {0/1,216/4,288/5}{
					\node[vertex_style_1] (\y) at (canvas polar cs: radius=2.5cm,angle=\x){};}
				\foreach \x/\y in {72/2,144/3}{
					\node[vertex_style] (\y) at (canvas polar cs: radius=2.5cm,angle=\x){};}
				\foreach \x/\y in {72/7,144/8,288/10}{
					\node[vertex_style_1] (\y) at (canvas polar cs: radius=5cm,angle=\x){};}
				\foreach \x/\y in {0/6,216/9}{
					\node[vertex_style] (\y) at (canvas polar cs: radius=5cm,angle=\x){};}
			\end{scope}
			
			\foreach \x/\y in {1/6,2/7,3/8,4/9,5/10}{
				\draw[edge_style] (\x) -- (\y);}
			
			\foreach \x/\y in {1/3,2/4,3/5,4/1,5/2}{
				\draw[edge_style] (\x) -- (\y);}
			
			\foreach \x/\y in {6/7,7/8,8/9,9/10,10/6}{
				\draw[edge_style] (\x) -- (\y);}
		\end{tikzpicture}
		\caption{A (non-optimal) general position set (left) and a gp-set (right) of the Petersen graph are shown in red.}\label{fig:Petersen}
	\end{figure}
	
	If a robot is located at a vertex $u$ and $u \sim v$, then we indicate the movement of the robot from $u$ to $v$ along the edge $uv$ by $u \move v$ and refer to this as a \emph{move}. Suppose that we assign exactly one robot to each vertex of a general position set $S$. If a robot is stationed at a vertex $u$ of $S$, then the move $u \move v$ is called a \emph{legal move} if i) $v \notin S$ (thereby avoiding having more than one robot per vertex at any stage) and ii) the new set $(S\setminus\{ u\} ) \cup \{ v\}$ is also a general position set. A configuration of robots on a general position set of $G$ is called a \emph{mobile general position set} if there is a sequence of legal moves starting from $S$ such that every vertex of $G$ is visited at least once by some robot. The \emph{mobile general position number}, written $\mob (G)$, is the largest number of robots in a mobile general position set of $G$. We will refer to a largest possible configuration of robots in mobile general position as a {\em $\mob$-set}. 
	
	Fig.~\ref{fig:Petersen-moves} shows a few configurations of a $\mob$-set for the Petersen graph. The red arrows indicate the legal moves that lead from one configuration to the next. Notice that the last configuration is equivalent to the first up to symmetry. By repeating (symmetrical equivalents of) this sequence of legal moves every vertex of the Petersen graph can be visited. It is shown in~\cite{klavzar-2023} that this mobile general position has the largest possible cardinality amongst all such sets, i.e. it is a $\mob$-set.
	
	\begin{figure}[ht!]
		\centering
		\begin{tikzpicture}[scale=0.3, vertex_style/.style={circle, ball color=black},vertex_style_1/.style={circle, ball color=red},
			edge_style/.style={thick, black,drop shadow={opacity=0.4}}]
			
			\begin{scope}[rotate=90]
				\foreach \x/\y in {216/4,288/5}{
					\node[vertex_style_1] (\y) at (canvas polar cs: radius=2.5cm,angle=\x){};}
				\foreach \x/\y in {0/1,72/2,144/3}{
					\node[vertex_style] (\y) at (canvas polar cs: radius=2.5cm,angle=\x){};}
				\foreach \x/\y in {0/6,144/8}{
					\node[vertex_style_1] (\y) at (canvas polar cs: radius=5cm,angle=\x){};}
				\foreach \x/\y in {72/7,216/9,288/10}{
					\node[vertex_style] (\y) at (canvas polar cs: radius=5cm,angle=\x){};}
			\end{scope}
			
			\foreach \x/\y in {1/6,2/7,3/8,4/9,5/10}{
				\draw[edge_style] (\x) -- (\y);}
			
			\foreach \x/\y in {1/3,2/4,3/5,4/1,5/2}{
				\draw[edge_style] (\x) -- (\y);}
			
			\foreach \x/\y in {6/7,7/8,8/9,9/10,10/6}{
				\draw[edge_style] (\x) -- (\y);}
			
			\draw[ultra thick, color=red,->] (6) -- (7);
			
		\end{tikzpicture}
		\hspace*{0.3cm}
		\begin{tikzpicture}[scale=0.3, vertex_style/.style={circle, ball color=black},vertex_style_1/.style={circle, ball color=red},
			edge_style/.style={thick, black,drop shadow={opacity=0.4}}]
			
			\begin{scope}[rotate=90]
				\foreach \x/\y in {216/4,288/5}{
					\node[vertex_style_1] (\y) at (canvas polar cs: radius=2.5cm,angle=\x){};}
				\foreach \x/\y in {0/1,72/2,144/3}{
					\node[vertex_style] (\y) at (canvas polar cs: radius=2.5cm,angle=\x){};}
				\foreach \x/\y in {72/7,144/8}{
					\node[vertex_style_1] (\y) at (canvas polar cs: radius=5cm,angle=\x){};}
				\foreach \x/\y in {0/6,216/9,288/10}{
					\node[vertex_style] (\y) at (canvas polar cs: radius=5cm,angle=\x){};}
			\end{scope}
			
			\foreach \x/\y in {1/6,2/7,3/8,4/9,5/10}{
				\draw[edge_style] (\x) -- (\y);}
			
			\foreach \x/\y in {1/3,2/4,3/5,4/1,5/2}{
				\draw[edge_style] (\x) -- (\y);}
			
			\foreach \x/\y in {6/7,7/8,8/9,9/10,10/6}{
				\draw[edge_style] (\x) -- (\y);}
			
			\draw[ultra thick, color=red,->] (4) -- (1);
		\end{tikzpicture}
		\hspace*{0.3cm}
		\begin{tikzpicture}[scale=0.3, vertex_style/.style={circle, ball color=black},vertex_style_1/.style={circle, ball color=red},
			edge_style/.style={thick, black,drop shadow={opacity=0.4}}]
			
			\begin{scope}[rotate=90]
				\foreach \x/\y in {0/1,288/5}{
					\node[vertex_style_1] (\y) at (canvas polar cs: radius=2.5cm,angle=\x){};}
				\foreach \x/\y in {72/2,144/3,216/4}{
					\node[vertex_style] (\y) at (canvas polar cs: radius=2.5cm,angle=\x){};}
				\foreach \x/\y in {72/7,144/8}{
					\node[vertex_style_1] (\y) at (canvas polar cs: radius=5cm,angle=\x){};}
				\foreach \x/\y in {0/6,216/9,288/10}{
					\node[vertex_style] (\y) at (canvas polar cs: radius=5cm,angle=\x){};}
			\end{scope}
			
			\foreach \x/\y in {1/6,2/7,3/8,4/9,5/10}{
				\draw[edge_style] (\x) -- (\y);}
			
			\foreach \x/\y in {1/3,2/4,3/5,4/1,5/2}{
				\draw[edge_style] (\x) -- (\y);}
			
			\foreach \x/\y in {6/7,7/8,8/9,9/10,10/6}{
				\draw[edge_style] (\x) -- (\y);}
			
			\draw[ultra thick, color=red,->] (8) -- (9);
		\end{tikzpicture}
		\hspace*{0.3cm}
		\begin{tikzpicture}[scale=0.3, vertex_style/.style={circle, ball color=black},vertex_style_1/.style={circle, ball color=red},
			edge_style/.style={thick, black,drop shadow={opacity=0.4}}]
			
			\begin{scope}[rotate=90]
				\foreach \x/\y in {0/1,288/5}{
					\node[vertex_style_1] (\y) at (canvas polar cs: radius=2.5cm,angle=\x){};}
				\foreach \x/\y in {72/2,144/3,216/4}{
					\node[vertex_style] (\y) at (canvas polar cs: radius=2.5cm,angle=\x){};}
				\foreach \x/\y in {72/7,216/9}{
					\node[vertex_style_1] (\y) at (canvas polar cs: radius=5cm,angle=\x){};}
				\foreach \x/\y in {0/6,144/8,288/10}{
					\node[vertex_style] (\y) at (canvas polar cs: radius=5cm,angle=\x){};}
			\end{scope}
			
			\foreach \x/\y in {1/6,2/7,3/8,4/9,5/10}{
				\draw[edge_style] (\x) -- (\y);}
			
			\foreach \x/\y in {1/3,2/4,3/5,4/1,5/2}{
				\draw[edge_style] (\x) -- (\y);}
			
			\foreach \x/\y in {6/7,7/8,8/9,9/10,10/6}{
				\draw[edge_style] (\x) -- (\y);}
		\end{tikzpicture}
		
		\caption{A sequence of legal moves (shown by red arrows) for a $\mob$-set of the Petersen graph.}\label{fig:Petersen-moves}
	\end{figure}
	
	In~\cite{tian-2024+} the variety of general position problems in graphs was introduced, including general position sets, outer general position sets, dual general position sets, and total general position sets. For our purposes we recall the following definitions. If $X\subseteq V(G)$, then $u,v\in V(G)$ are {\em $X$-positionable} if for any shortest $u,v$-path $P$ we have $V(P)\cap X \subseteq \{u,v\}$. Hence $X$ is a general position set if all pairs $u,v\in X$ are $X$-positionable. If it also holds that every pair $u,v$ with $u\in X$ and $v\in V(G) \setminus X$ is $X$-positionable, then $X$ is an \emph{outer general position set}. The cardinality of a largest outer general position set of $G$ is denoted by $\gpo(G)$ and is called the {\em outer general position number}. It is shown in~\cite{tian-2024+} that outer general position sets coincide with sets of mutually maximally distant vertices. (The latter concept was introduced in~\cite{Oellermann-2007}, see also the related survey~\cite{Kuziak-2018}.) In particular, in a block graph the outer general position sets are the sets of simplicial vertices; in the case of a tree this yields $\gp (T) = \gpo (T) = \ell (T)$.
	
	\section{Bounds for Cartesian products}
	\label{sec:Cartesian}
	
	Recall that the {\em Cartesian product} $G\cp H$ of graphs $G$ and $H$ satisfies $V(G\cp H) = V(G)\times V(H)$ and $(g,h)(g',h')\in E(G\cp H)$ if either $gg'\in E(G)$ and $h=h'$, or $g=g'$ and $hh'\in E(H)$. A \emph{$G$-layer} is a subgraph of $G \cp H$ induced by $V(G) \times \{ h\}$ for some $h \in V(H)$, which will be denoted by $G^h$, with a similar definition for $H$-layers $^gH$, where $g\in V(G)$. Likewise, if $P$ is a path $u_1,\dots ,u_{r}$ in $G$ and $h \in V(H)$, then we will denote the path $(u_1,h),(u_2,h),\dots ,(u_{r},h)$ in $G \cp H$ by $P^h$ (with an analogous definition of $^gQ$ for a path $Q$ in $H$ and $g \in V(G)$).
	
	We begin by deriving some bounds on $\mob (G \cp H)$. A trivial upper bound is $\mob (G \cp H) \leq \gp (G \cp H)$. Proposition~\ref{prop:bounds-for-cp} gives two lower bounds in terms of the mobile and outer general position numbers of the factors.

	\begin{proposition}
		\label{prop:bounds-for-cp}
		For any connected graphs $G$ and $H$ of order at least two, the following hold. 
		\begin{enumerate}
			\item[(i)] $\mob(G\cp H) \ge \max\{\mob(G), \mob(H)\}$.
			\item[(ii)] $\mob (G \cp H) \geq \max \{ \gpo(G), \gpo(H)\}$. 
		\end{enumerate}
	\end{proposition}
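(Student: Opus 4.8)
The plan rests on two standard facts about the Cartesian product: the distance formula $d_{G\cp H}((g,h),(g',h'))=d_G(g,g')+d_H(h,h')$, and the resulting betweenness criterion that $(g_2,h_2)$ lies on a shortest $(g_1,h_1)$--$(g_3,h_3)$-path if and only if $g_2$ lies on a shortest $g_1$--$g_3$-path in $G$ \emph{and} $h_2$ lies on a shortest $h_1$--$h_3$-path in $H$. The key consequence I would use throughout is this: if $W\subseteq V(G\cp H)$ has pairwise distinct $G$-coordinates whose set is a general position set of $G$, then $W$ is a general position set of $G\cp H$ no matter what the $H$-coordinates are, since any three vertices of $W$ have distinct $G$-coordinates and so already fail the $G$-part of the criterion. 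In particular, the image of a gp-set of $G$ inside a single layer $G^h$ is a gp-set of $G\cp H$, and so is any \emph{transversal} placing one robot per column over a fixed gp-set of $G$ at arbitrary heights.

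For part (i) it suffices, by symmetry, to prove $\mob(G\cp H)\ge\mob(G)$. First I would fix a $\mob$-set $S$ of $G$ together with a sequence $M$ of legal moves starting at $S$ that visits every vertex of $G$, noting that each individual move is reversible (the reverse move returns to a gp-set and to an unoccupied vertex), so $M$ may also be run backwards. Placing the robots in a layer $G^{h}$, the moves of $M$ stay legal in $G\cp H$ by the observation above, so $M$ sweeps all of $G^{h}$. To pass from $G^{h}$ to an adjacent layer $G^{h'}$ I would \emph{migrate} the current configuration $C$ by moving the robots one at a time in the $H$-direction, $(c,h)\move(c,h')$; throughout a migration the occupied columns are exactly $C$ (each sitting in one of the two layers), so every intermediate configuration is a transversal over the gp-set $C$ and hence legal. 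Taking a spanning walk $h_1,h_2,\dots$ of the connected graph $H$ and alternately running $M$ and its reverse on successive layers, interleaved with migrations, sweeps every layer and therefore visits every vertex of $G\cp H$, giving a mobile gp set of size $\mob(G)$.

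For part (ii), again by symmetry, I would establish $\mob(G\cp H)\ge\gpo(G)$. Let $X=\{x_1,\dots,x_k\}$ be an outer gp-set of $G$ with $k=\gpo(G)$, fix a base height $\alpha$, and start all robots at $X\times\{\alpha\}$. The home fibres $\{x_i\}\times V(H)$ are swept one robot at a time as transversals. The crux is that a single roaming robot, say robot $1$, can then visit every remaining column $y\in V(G)\setminus X$ and sweep its fibre while the others stay parked at $(x_j,\alpha)$. Here outer position of $X$ is used twice: every shortest $x_1$--$y$-path meets $X$ only in $x_1$, so the roamer travels to $y$ without entering a parked column; and no parked column $x_j$ lies between another parked column $x_l$ and the external vertex $y$, which is exactly the statement that $x_l$ and $y$ are $X$-positionable. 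Hence, as long as the roamer stays off the base layer (height $\neq\alpha$) while outside its home column, the betweenness criterion shows that the whole configuration remains in general position at every step; returning home is just the reverse sequence.

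The one configuration escaping this argument is the roamer sitting in the base layer at a column $y\notin X$: then all $k$ robots share $G^{\alpha}$, and $y$ may lie between two vertices of $X$ in $G$, destroying general position. This is the main obstacle, and I would resolve it with a second pass. Choosing the two base heights to be non-cut vertices $\alpha,\beta$ of $H$ (which exist since $H$ is connected of order at least two) keeps $H$ minus the base layer connected, so in the first pass robot $1$ can sweep each fibre of a column $y\notin X$ over all heights except $\alpha$; the only cells then left unvisited are those of the form $(y,\alpha)$ with $y\notin X$. Moving the parked configuration to base height $\beta$ within the home fibres and repeating the excursions, these cells now lie off the new base layer and so are safe to visit. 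Two passes with distinct base heights visit every vertex of $G\cp H$, yielding a mobile gp set of size $\gpo(G)$.
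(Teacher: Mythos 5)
Your proof is correct, and it rests on the same sound foundation as the paper's: the Cartesian distance formula and the observation that a configuration with one robot per column, the columns forming a general position set of $G$, is automatically in general position in $G\cp H$ regardless of the $H$-coordinates. The mechanics, however, differ in both parts. For (i) the paper keeps all robots inside the single convex layer $G^h$ for the whole sweep and covers the rest of the product by sending each robot, whenever it sits at $(g,h)$, on a round-trip excursion up its fibre ${}^gH$; you instead migrate the entire configuration from layer to layer along a spanning walk of $H$, alternately running the move sequence and its reverse. Both are valid; yours needs the (correct) reversibility remark, the paper's needs the excursion-while-others-wait argument. For (ii) the paper lets \emph{every} robot roam, confining the robot based at $u_i$ to the component of $u_i$ in $G$ minus the other vertices of the outer general position set, and observing that these components cover $V(G)$; you use a single roamer, which is legitimate precisely because outer positionability of the pair $(x_1,y)$ for every $y\notin X$ forces all of $V(G)\setminus X$ into the component of $x_1$ --- a slightly stronger consequence of the outer gp property than the paper extracts. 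Both arguments then need a second pass to pick up the cells of the base layer outside the parked columns; the paper shifts the parked set to an adjacent height of $H$, while you shift to a second non-cut vertex of $H$ (and you correctly note that two non-cut vertices always exist in a connected graph of order at least two, and that non-cutness is what lets the roamer sweep a fibre while avoiding the base height). The single-roamer formulation is arguably cleaner, at the price of the extra connectivity observation.
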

	
	\begin{proof}
		$(i)$ Let $S$ be a $\mob$-set of $G$ and let $h\in V(H)$. We first position the robots at the vertices of the set $S \times \{ h\}$. As $G^h$ is a convex subgraph of $G \cp H$, robots initially stationed at the vertices of $S \times \{ h\} $ can visit every vertex of $G^h$ by a sequence of legal moves, all the time remaining inside $G^h$ and in general position in $G \cp H$. Now, whenever a robot visits a vertex $(g,h)$ in this layer, this robot can visit all the vertices in the $H$-layer $^gH$ corresponding to $g$ by a sequence of legal moves and then return to $(g,h)$. Hence $\mob(G\cp H) \ge \mob(G)$. By symmetry, $\mob(G\cp H) \ge \mob(H)$. 
		
		$(ii)$ Let $S = \{ u_1,\dots ,u_r\} $ be an outer general position set of $G$ of cardinality $\gpo(G)$ and start with robots positioned at each vertex of $S \times \{ h\} $ for some $h \in V(H)$. Let $R_i$ be the robot at $(u_i, h)$ for $i \in [r]$. Also, for each $i\in [r]$, let $G_i$ be the connected component containing $u_i$ in $G \setminus (S\setminus\{ u_i\} )$. 
		
		Let $h' \in V(H)\setminus\{ h\}$ and let $Q$ be a shortest $h,h'$-path in $H$. The robot $R_i$ can follow the path ${}^{u_i}Q$ from $(u_i,h)$ to reach the vertex $(u_i,h')$ by legal moves. At this point, $R_i$ can visit all the vertices of $V(G_i) \times \{ h' \}$ by a sequence of legal moves. To see this, notice that the shortest paths between the robots remaining in $G^h$ lie within the layer $G^h$, whilst the shortest paths from $R_i$ to any $R_j$, $i \neq j$, do not pass through a third robot by the outer general position property. Afterwards $R_i$ can return to $(u_i,h)$ by performing these legal moves in the reverse order. As this holds for any of the robots and any $h' \in V(H) \setminus \{ h\} $, this allows us to perform a sequence of legal moves so that any vertex of $G \cp H$ outside $G^h$ is visited, since $\bigcup _{i=1}^{r}V(G_i) = V(G)$. Having done this, we return all the robots to their original positions in $S \times \{ h\} $ using legal moves.
		
		Finally, let $h^{\prime } \in N_H(h)$. For $i\in [r]$ we move the robot $R_i$ from $(u_i,h)$ to $(u_i,h^{\prime })$ in sequence; as $S$ is in general position each of these moves is legal. Afterwards the robots will occupy the set $S \times \{ h^{\prime }\}$. The previous reasoning applied to $G^{h^{\prime }}$ now shows that the robots can visit each vertex of $V(G) \times \{ h\} $ by legal moves. Thus $\mob (G \cp H) \geq \gpo(G)$ and, by a symmetric argument, $\mob (G \cp H) \geq \gpo(H)$.
	\end{proof}
	
	We now show that both lower bounds in Proposition~\ref{prop:bounds-for-cp} are sharp by considering the Cartesian products $K_r \cp P_s$. We recall that~\cite[Theorem 3.2]{tian-2021a} implies that $\gp(K_r \cp P_s) = r+1$ for $s\ge 3$. Observe that for $r,s \geq 2$ it holds that $\max \{ \mob{(K_r)}, \mob{(P_s)} \} = \max\{ \gpo{(K_r)}, \gpo{(P_s)} \} = r$.
	
	\begin{proposition}\label{prop:clique cp path}
		For all positive integers $r,s \geq 2$, \[\mob(K_r \cp P_s) = r.\]
	\end{proposition}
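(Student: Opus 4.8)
The plan is to combine the general lower bound from Proposition~\ref{prop:bounds-for-cp} with a structural analysis of the maximum general position sets that pins down exactly which legal moves are ever available. The lower bound is immediate: since every vertex of a complete graph is simplicial we have $\gpo(K_r)=r$, so part (ii) of Proposition~\ref{prop:bounds-for-cp} already gives $\mob(K_r\cp P_s)\ge r$, matching the observation recorded just before the statement. All the work is in the upper bound $\mob(K_r\cp P_s)\le r$.

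Write each vertex as $(g,l)$ with $g\in V(K_r)$ and $l\in[s]$, so that the $P_s$-layers ${}^gP_s$ are the ``columns'' and the $K_r$-layers $K_r^l$ are the ``levels''. First I would record the metric, namely $d((g,l),(g',l'))=|l-l'|$ if $g=g'$ and $|l-l'|+1$ otherwise, noting in particular that the shortest $(g,l)$-$(g,l')$-path is unique and stays inside column $g$. From this I would prove the key structural lemma describing an arbitrary general position set $S$: (a) each column contains at most two vertices of $S$ (a third would lie between the other two); (b) if some column $g_0$ carries vertices at levels $i<j$, then every other vertex of $S$ has level strictly between $i$ and $j$, since otherwise a short computation shows that one of $(g_0,i),(g_0,j)$ is between the outside vertex and the other member of the pair; and (c) consequently at most one column can carry two vertices, as two such columns would each force the other's pair strictly inside its own interval, which is impossible. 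Counting over the $r$ columns then yields $\gp(K_r\cp P_s)\le r+1$, with equality forcing exactly one ``long'' column carrying a spanning pair at levels $i<j$ together with $r-1$ ``short'' columns each carrying a single vertex at a level in the open interval $(i,j)$. For $s=2$ this interval is empty, so $\gp=r$ and $\mob\le\gp=r$ is immediate; alternatively one may cite $\gp(K_r\cp P_s)=r+1$ for $s\ge3$ directly, but the structural shape is needed for the movement argument in any case.

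It remains to exclude a mobile general position set of size $r+1$ when $s\ge3$. Suppose one existed. Every configuration produced by legal moves is again an $(r+1)$-general position set, hence always has the long-column/short-columns shape above. The crucial step, and the main obstacle, is to show that from any such configuration \emph{no horizontal move} (a move inside a $K_r$-layer, i.e.\ one changing a robot's column) can be legal: moving a pair robot into another column strands its partner outside the new spanning interval; moving a short robot into the long column produces three robots in one column; and moving a short robot into another short column creates a second two-robot column. By (a)--(c) each of these destroys the general position property, so only vertical moves inside a fixed column are ever available. Consequently every robot is confined to its own column for all time and the long column never changes.

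Given this rigidity, the contradiction is quick: a short robot always sits at a level strictly inside the current pair interval $[i,j]\subseteq[1,s]$, hence always in $\{2,\dots,s-1\}$, and since no robot ever enters its column, the vertices $(g,1)$ and $(g,s)$ of every short column $g$ are never visited. As $r\ge2$ guarantees at least one short column, this contradicts the requirement that every vertex be visited. Therefore $\mob(K_r\cp P_s)\le r$, and with the lower bound we conclude $\mob(K_r\cp P_s)=r$. I expect the only delicate part to be the bookkeeping in the structural lemma and, above all, the exhaustive check that every column-changing move breaks general position; once that is in place the trapping of the short-column robots away from the extreme layers is essentially automatic.
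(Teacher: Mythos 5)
Your proposal is correct and follows essentially the same route as the paper: establish the lower bound via Proposition~\ref{prop:bounds-for-cp}, then show that any configuration of $r+1$ robots must consist of one $P_s$-layer containing a spanning pair with all other robots confined to single positions strictly between its levels, deduce that no robot can ever change $P_s$-layer, and conclude that the extreme vertices of the singly-occupied layers can never be visited. Your write-up is merely a bit more explicit than the paper's in enumerating the forbidden column-changing moves; no substantive difference.
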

	
	\begin{proof}
		Let $r,s \geq 2$. By Proposition~\ref{prop:bounds-for-cp} we have $\mob(K_r \cp P_s) \geq r$. We now show that $\mob(K_r \cp P_s) \leq r$. Let $V(P_s) = [s]$ and suppose for a contradiction that there exists a mobile general position set $S$ of $K_r \cp P_s$ with $|S| > r$. Then choose $x \in V(K_r)$, such that $(x, i), (x,j) \in V(K_r \cp P_s)$ are occupied by robots $R_1, R_2$, where $i < j$. 
		
		First, notice that for any $y \in V(K_r)$ and $j \leq k \leq s$ the vertex $(y,k)$ cannot be occupied by a robot in the initial configuration $S$, for otherwise $(x,j)$ would lie on a shortest path between $(x,i)$ and $(y,k)$. Similarly, every other vertex $(y,k)$ with $k \leq i$ is unoccupied. The same reasoning shows that any other $P_s$-layer can contain at most one robot, since if there are robots at $(y,k)$ and $(y,k')$, where $y \in V(K_r) \setminus \{ x\} $ and $i < k < k' < j$, then $(y,k')$ would lie on a shortest $(y,k),(x,j)$-path. Thus, we must have $|S| = r+1$ and each layer $^yP_s$ contains exactly one robot for $y \neq x$. Moreover, each of the remaining $r-1$ robots different from $R_1$ and $R_2$ are located at vertices $(y,k)$ such that $i<k<j$.
		
		It now follows that neither $R_1$ nor $R_2$ can cross to another $P_s$-layer by a sequence of legal moves. Otherwise, suppose that robots $R_1$ and $R_2$ are stationed at $(x,i')$ and $(x,j')$ respectively just before $R_1$ moves to a different $P_s$-layer by the legal move $(x,i') \move (y,i')$, $y \in V(K_r) \setminus \{ x\} $. Since there is a robot at some vertex $(y,k')$ with $i' < k' < j'$, after this move the robot at $(y,k')$ would lie on a shortest path from $(y,i')$ to $(x,j')$. As a result, no robot can visit any vertex in $(V(K_n)\setminus \{ x\} ) \times \{ 1,s\} $, a contradiction. Thus at most $r$ robots can traverse $K_r \cp P_s$ in general position.
	\end{proof}
	
	To see that the lower bounds of Proposition~\ref{prop:bounds-for-cp} are independent in general, consider the following examples. If $n\ge 2$, then $\mob(K_{n,n}) = 2$ and $\gpo(K_{n,n}) = n$. Hence the bound $(i)$ yields $\mob(K_{n,n} \cp K_{n,n})\ge 2$, whilst $(ii)$ yields $\mob(K_{n,n} \cp K_{n,n}) \ge n$. On the other hand, if $n \ge 7$, then $\mob(C_n) = 3$ and $\gpo(C_n) = 2$, hence the bound $(i)$ is better for $\mob(C_{n} \cp C_{n})$ if $n \geq 7$. Moreover, just after Theorem~\ref{thm:Hamming-graphs} we will demonstrate that the mobile general position number of a graph can be arbitrarily larger than its outer general position number, so that bound $(i)$ can also be arbitrarily larger than bound $(ii)$.
	
	From~\cite[Theorem 3.2]{ghorbani-2021} we recall that if $n\ge 2$ and $m\ge 2$, then $\gp(K_n\cp K_m) = n + m - 2$. We now sharpen this result by demonstrating that a gp-set of $K_n\cp K_m$ as constructed in~\cite[Theorem 3.2]{ghorbani-2021} is essentially unique as soon as $n\ge 3$ and $m\ge 3$. 
	
	\begin{lemma}
		\label{lem:unique-gp-in-Hamming}
		Let $n\ge 3$, $m\ge 3$, $V(K_n) = [n]$, $V(K_m) = [m]$, and let $X$ be a gp-set of $K_n\cp K_m$. Then there exist $i\in [n]$ and $j\in [m]$ such that 
		$$X = \left( V((K_n)^j) \cup V(^i(K_m))\right )\setminus \{(i,j)\}\,.$$ 
	\end{lemma}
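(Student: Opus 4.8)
The plan is to first convert the geodesic condition into a transparent combinatorial one, and then run a double count that forces the extremal configuration. Throughout I identify a vertex $(a,b)$ with the cell in row $a$ and column $b$ of an $n\times m$ array, so that the layers $^a(K_m)$ are the rows and the layers $(K_n)^b$ are the columns. Since $\diam(K_n\cp K_m)=2$, two vertices are adjacent exactly when they agree in one coordinate and lie at distance $2$ exactly when they differ in both; in the latter case the two internal vertices of the two geodesics are the ``corners'' completing the rectangle. The key observation (Step~1) that I would establish is: $X$ is a general position set if and only if every vertex of $X$ is alone in its row or alone in its column. Indeed, the middle vertex of any three-vertex geodesic inside $X$ is precisely a vertex of $X$ that simultaneously possesses a same-row companion and a same-column companion, and conversely any such vertex is the midpoint of a geodesic joining those two companions.

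Next (Step~2) I would set up the count. Call a row or column \emph{heavy} if it meets $X$ in at least two vertices and \emph{light} if it meets $X$ in exactly one. The characterization forces a vertex of a heavy row to be alone in its column, and symmetrically; in particular no vertex of $X$ lies simultaneously in a heavy row and a heavy column. Partitioning $V(X)$ according to whether its row and column are heavy or light, and counting rows and columns in two ways, I would obtain the identity
\[
|X| = n + m - |R_H| - |C_H| - |R_\emptyset| - |C_\emptyset| - \gamma,
\]
where $R_H,C_H$ denote the heavy rows/columns, $R_\emptyset,C_\emptyset$ the empty rows/columns, and $\gamma$ the number of vertices lying in both a light row and a light column.

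For the conclusion (Step~3), since $\gp(K_n\cp K_m)=n+m-2$ by \cite[Theorem 3.2]{ghorbani-2021}, the gp-set $X$ satisfies $|X|=n+m-2$, whence $|R_H|+|C_H|+|R_\emptyset|+|C_\emptyset|+\gamma=2$. Because $n,m\ge 3$ give $|X|\ge\max\{n,m\}+1$, I can discard every distribution but one: if $C_H=\emptyset$ then every vertex sits in a light or empty column, forcing $|X|\le m$, and symmetrically $R_H=\emptyset$ forces $|X|\le n$, both absurd. Hence $|R_H|=|C_H|=1$ and $|R_\emptyset|=|C_\emptyset|=\gamma=0$. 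Writing $i$ for the unique heavy row and $j$ for the unique heavy column, the two layer counts then give that $^i(K_m)$ and $(K_n)^j$ contain $m-1$ and $n-1$ vertices of $X$ respectively, while every vertex of $X$ lies in one of these two layers. Moreover $(i,j)\notin X$, as it would be the forbidden vertex lying in both a heavy row and a heavy column; since each of the two layers misses exactly one cell, that missing cell must be $(i,j)$ in both, and therefore $X=(V((K_n)^j)\cup V(^i(K_m)))\setminus\{(i,j)\}$.

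The main obstacle is the bookkeeping in Steps~2 and~3 rather than any single hard idea: one must keep careful track of empty rows/columns and of the quantity $\gamma$ so that the single additive constraint collapses to the unique admissible distribution, and one must check that the hypothesis $n,m\ge 3$ is exactly what rules out the degenerate ``single arm'' configurations. (The statement genuinely fails when $n=m=2$, since $K_2\cp K_2=C_4$ admits gp-sets of size two that are not of this cross form, so the role of $n,m\ge 3$ should be made explicit.)
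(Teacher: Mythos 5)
Your proof is correct, and it takes a genuinely different route from the paper's. Both arguments rest on the same underlying observation about rectangles: your Step~1 characterization (every vertex of $X$ must be alone in its row or alone in its column) is equivalent to the paper's ``Claim A'', which is phrased as $X$ meeting every induced $C_4$ in at most two vertices and deducing that two $X$-vertices in a common $K_m$-layer forbid all other vertices in their two $K_n$-layers. From there the paths diverge. The paper argues extremally: it takes a $K_n$-layer meeting $X$ in the maximum number $t$ of vertices, shows $2\le t\le n-1$, settles $t=n-1$ directly, and eliminates $2\le t\le n-2$ by restricting $X$ to a convex subproduct isomorphic to $K_k\cp K_{m-1}$ and invoking $\gp(K_k\cp K_{m-1})=k+m-3$. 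You instead run a global double count over heavy/light/empty rows and columns; I checked your identity $|X|=n+m-|R_H|-|C_H|-|R_\emptyset|-|C_\emptyset|-\gamma$ (it follows from $|C_L|=(\#\text{ vertices in heavy rows})+\gamma$ and its transpose), and the single constraint $|R_H|+|C_H|+|R_\emptyset|+|C_\emptyset|+\gamma=2$ together with $|X|>\max\{n,m\}$ does collapse to the cross configuration as you claim. Your approach buys a proof that only uses $\gp(K_n\cp K_m)=n+m-2$ for the product at hand, rather than also for a smaller subproduct, and it makes the role of the hypothesis $n,m\ge 3$ completely explicit (your closing remark that the statement fails for $K_2\cp K_2=C_4$ is correct, and in fact it also fails for $m=2$, where a full $K_n$-layer is a gp-set not of cross form); the paper's approach is shorter to write once Claim~A is in place and generalizes more readily to inductive arguments on larger Hamming graphs. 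When writing up, the one step that deserves full detail is the derivation of the counting identity, since that is where the empty layers and the quantity $\gamma$ must be tracked precisely.
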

	
	\begin{proof}
		Let $X$ be a gp-set of $K_n\cp K_m$. Then, as stated above, $|X| = n + m - 2$. Note that the set $X$ contains at most two vertices from every induced copy of $C_4$ of $K_n\cp K_m$. This fact implies the following: 
		
		\medskip\noindent
		{\bf Claim A}: if $(i,j), (i,j')\in X$, where $j\ne j'$, then 
		$X \cap \left( V((K_n)^j) \cup V((K_n)^{j'})  \right) = \{(i,j), (i,j')\}$. 
		
		\medskip
		If $X$ contains all the vertices of some $K_n$-layer, then by Claim~A, $X$ contains no other vertices, implying that $|X| = n < n + m - 2$, which is not possible. Let $j\in [m]$ be such that $t = |X \cap V((K_n)^{j})|$ is as large as possible. Note that $t\ge 2$, for otherwise we would have $|X|\le m$. Moreover, by the above, $t \le n-1$. If $t = n-1$ and $i\in [n]$ is the index for which $(i,j)\notin X$, then using Claim~A again, we have that $X \subseteq V((K_n)^j) \cup V(^i(K_m))$. Since $|X| = n+m-2$, we conclude that $X$ has the required structure in this case. Suppose finally that $t=n-k\ge 2$, where $k\ge 2$. We may assume without loss of generality that $X \cap V((K_n)^j ) = \{(1,j), \ldots, (n-k,j)\}$. Then, using Claim~A once more, 
		$$X \cap \bigcup_{\ell = 1}^{n-k} V((K_n)^{\ell}) = \{(1,j), \ldots, (n-k,j)\}\,.$$
		Notice that the subgraph $H$ of $K_n\cp K_m$ induced by the vertex set 
		$$\{n-k+1, \ldots, n\} \times \left( \{1, \ldots, j-1\} \cup \{j+1, \ldots, m \} \right)$$
		is isomorphic to $K_{k} \cp K_{m-1}$. Since it is a convex subgraph of $K_n\cp K_m$, the intersection $X\cap V(H)$ is a general position set of $H$. Therefore, 
		$$|X \cap V(H)| \le \gp(K_{k} \cp K_{m-1}) = k + (m-1) - 2 = k + m - 3\,.$$ 
		By our assumption on $t$ we have $X\cap \{(n-k+1,j), \ldots, (n,j)\} = \emptyset$. Thus we can conclude that $|X| \le (n - k) + (k + m - 3) = n + m -3$, a contradiction. 
		
		We have thus proved that $|X| = n + m - 2$ holds only in the case when $t = n-1$ and $X$ has the structure as claimed. 
	\end{proof}
	
	Lemma~\ref{lem:unique-gp-in-Hamming} is illustrated in Fig.~\ref{fig:K_7-byK_5}, where the gp-set of $K_7\cp K_5$ corresponding to the vertex $(i,j)$ is shown. We now use this result to find the mobile general position number of $K_n \cp K_m$.
	
	\begin{figure}[ht!]
		\begin{center}
			\begin{tikzpicture}[scale=0.7,style=thick]
				\tikzstyle{every node}=[draw=none,fill=none]
				\def\vr{3pt} 
				
				\begin{scope}[yshift = 0cm, xshift = 0cm]
					\path (0,0) coordinate (v1);
					\path (1,0) coordinate (v2);
					\path (2,0) coordinate (v3);
					\path (3,0) coordinate (v4);
					\path (4,0) coordinate (v5);
					\path (5,0) coordinate (v6);
					\path (6,0) coordinate (v7);
					\foreach \i in {1,...,7}
					{
						\draw (v\i)  [fill=white] circle (\vr);
					}
					\draw (v4)  [fill=black] circle (\vr);
					\draw (3,-1) node {$i$};
					\draw (-1,2) node {$j$};
					
				\end{scope}
				
				\begin{scope}[yshift = 1cm, xshift = 0cm]
					\path (0,0) coordinate (v1);
					\path (1,0) coordinate (v2);
					\path (2,0) coordinate (v3);
					\path (3,0) coordinate (v4);
					\path (4,0) coordinate (v5);
					\path (5,0) coordinate (v6);
					\path (6,0) coordinate (v7);
					\foreach \i in {1,...,7}
					{
						\draw (v\i)  [fill=white] circle (\vr);
					}
					\draw (v4)  [fill=black] circle (\vr);
				\end{scope}
				
				\begin{scope}[yshift = 2cm, xshift = 0cm]
					\path (0,0) coordinate (v1);
					\path (1,0) coordinate (v2);
					\path (2,0) coordinate (v3);
					\path (3,0) coordinate (v4);
					\path (4,0) coordinate (v5);
					\path (5,0) coordinate (v6);
					\path (6,0) coordinate (v7);
					\foreach \i in {1,...,7}
					{
						\draw (v\i)  [fill=white] circle (\vr);
					}
					\draw (v1)  [fill=black] circle (\vr);
					\draw (v2)  [fill=black] circle (\vr);
					\draw (v3)  [fill=black] circle (\vr);
					\draw (v5)  [fill=black] circle (\vr);
					\draw (v6)  [fill=black] circle (\vr);
					\draw (v7)  [fill=black] circle (\vr);
				\end{scope}
				
				\begin{scope}[yshift = 3cm, xshift = 0cm]
					\path (0,0) coordinate (v1);
					\path (1,0) coordinate (v2);
					\path (2,0) coordinate (v3);
					\path (3,0) coordinate (v4);
					\path (4,0) coordinate (v5);
					\path (5,0) coordinate (v6);
					\path (6,0) coordinate (v7);
					\foreach \i in {1,...,7}
					{
						\draw (v\i)  [fill=white] circle (\vr);
					}
					\draw (v4)  [fill=black] circle (\vr);
				\end{scope}
				
				\begin{scope}[yshift = 4cm, xshift = 0cm]
					\path (0,0) coordinate (v1);
					\path (1,0) coordinate (v2);
					\path (2,0) coordinate (v3);
					\path (3,0) coordinate (v4);
					\path (4,0) coordinate (v5);
					\path (5,0) coordinate (v6);
					\path (6,0) coordinate (v7);
					\foreach \i in {1,...,7}
					{
						\draw (v\i)  [fill=white] circle (\vr);
					}
					\draw (v4)  [fill=black] circle (\vr);
				\end{scope}

			\end{tikzpicture}
		\end{center}
		\caption{Canonical gp-set in $K_7\cp K_5$}
		\label{fig:K_7-byK_5}
	\end{figure}

	\begin{theorem}\label{thm:Hamming-graphs}
		If $n\ge m\ge 1$, then
		$$\mob(K_n\cp K_m) = 
		\left\{
		\begin{array}{ll}
			n;     &  m\in [2], \\
			n+m-3;     &  m\ge 3.
		\end{array} \right.
		$$
	\end{theorem}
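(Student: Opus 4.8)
The plan is to treat the regimes $m\le 2$ and $m\ge 3$ separately, and within the second to prove the two bounds by quite different means. For $m=1$ we have $K_n\cp K_1\cong K_n$, in which every vertex set is in general position and no robot need move, so $\mob = n$; for $m=2$ I would note $K_2 = P_2$ and apply Proposition~\ref{prop:clique cp path} with $r=n$, $s=2$ to get $\mob(K_n\cp K_2)=n$, settling the case $m\in[2]$. For the rest assume $n\ge m\ge 3$ and identify $V(K_n\cp K_m)$ with $[n]\times[m]$, so that $(i,j)\sim(i',j')$ iff exactly one coordinate agrees and all distances are $1$ or $2$. A convenient reformulation I would record first: since interior vertices of shortest paths arise only between vertices at distance $2$ (and are the two ``corners''), a set $X$ is in general position iff no vertex of $X$ has both a \emph{row-mate} (another vertex of $X$ in its $K_n$-layer) and a \emph{column-mate} (another vertex of $X$ in its $K_m$-layer). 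Moreover, the set in Lemma~\ref{lem:unique-gp-in-Hamming} is exactly an open neighbourhood $N(c)$ of a vertex $c=(i,j)$, so the gp-sets of $K_n\cp K_m$ are precisely the open neighbourhoods.

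For the upper bound I would show that a configuration of the maximum size $\gp(K_n\cp K_m)=n+m-2$ is frozen. If $S$ has this size, then every configuration reachable by legal moves is again a general position set of size $n+m-2$, hence a gp-set, hence of the form $N(c)$ by Lemma~\ref{lem:unique-gp-in-Hamming}. A single legal move replaces exactly one vertex, so it would have to turn $N(c)$ into $N(c')$ with $|N(c)\setminus N(c')|=1$ for some $c'\ne c$. A short case analysis according to whether $c,c'$ share a $K_n$-layer, share a $K_m$-layer, or neither shows that $|N(c)\setminus N(c')|\in\{m,n,n+m-4\}$, which is at least $2$ whenever $n\ge m\ge 3$. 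Hence no legal move exists, the robots are stuck on the $n+m-2$ vertices of $N(c)$, and since $nm>n+m-2$ not all vertices can be visited. Therefore $\mob(K_n\cp K_m)\le n+m-3$.

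For the matching lower bound I would exhibit an explicit mobile configuration of $n+m-3$ robots. Fix a gap column $i_0$ and use a \emph{rail} occupying $(K_n)^m$ at all columns except $i_0$ ($n-1$ robots, each alone in its column) together with a \emph{reservoir} at $(i_0,1),\dots,(i_0,m-2)$ ($m-2$ robots, each alone in its row), keeping the layer $(K_n)^{m-1}$ empty; this totals $n+m-3$. To visit every vertex of a column $i\ne i_0$ I would first slide the rail robot $(i,m)\move(i,m-1)$ into the empty layer, then shuttle the reservoir robots across via $(i_0,j)\move(i,j)$ for $j=1,\dots,m-2$ (legal precisely because $(i,m)$ has been vacated and each robot in transit remains alone in its row), and finally reverse both stages to restore the starting configuration; this visits all of column $i$. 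Running this routine over all $i\ne i_0$ covers every vertex outside column $i_0$, while the reservoir already covers $(i_0,1),\dots,(i_0,m-2)$. The two leftover vertices are handled at the end: $(i_0,m-1)$ by a shuffle within column $i_0$, and the centre $(i_0,m)$ by first emptying the reservoir into a cleared column and then sliding a rail robot horizontally onto $(i_0,m)$, where it is alone in the now-empty column $i_0$.

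Combining the two bounds yields $\mob(K_n\cp K_m)=n+m-3$ for $m\ge 3$. I expect the main obstacle to be the bookkeeping in the lower bound: one must verify that general position is preserved at \emph{every} intermediate step of the shuttle (in particular that a reservoir robot in transit never simultaneously acquires a row-mate and a column-mate, which is exactly why the empty layer $(K_n)^{m-1}$ and the single gap column are needed), and one must check that the stubborn centre $(i_0,m)$ — the unique vertex lying in both a heavy layer and a heavy column — can be reached without ever exceeding general position. By contrast, the upper bound is essentially immediate once Lemma~\ref{lem:unique-gp-in-Hamming} is available, reducing to the small neighbourhood-difference computation above.
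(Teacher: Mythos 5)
Your argument is correct and follows essentially the same route as the paper: the cases $m\le 2$ are handled identically (isomorphism with $K_n$, then Proposition~\ref{prop:clique cp path}), the upper bound for $m\ge 3$ rests on Lemma~\ref{lem:unique-gp-in-Hamming} plus the observation that a gp-set is frozen, and the lower bound is an explicit rail-and-reservoir configuration of $n+m-3$ robots. The only (harmless) deviations are that your computation $|N(c)\setminus N(c')|\in\{m,n,n+m-4\}\ge 2$ makes rigorous what the paper merely calls ``straightforward to check'', and that your traversal shuttles the reservoir into each column past a fixed rail rather than translating the whole rail across the $K_n$-layers as the paper does; both verifications go through.
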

	
	\begin{proof}
		If $m = 1$, then $K_n\cp K_1 \cong K_n$, thus $\gp(K_n\cp K_1) = \mob(K_n\cp K_1) = n$. The case $m = 2$ follows from Proposition~\ref{prop:clique cp path}.
		
		Let $m\ge 3$. By Lemma~\ref{lem:unique-gp-in-Hamming}, every gp-set of $K_n\cp K_m$ has the canonical form as illustrated in Fig.~\ref{fig:K_7-byK_5}. It is straightforward to check that no robot placed in such a set can make a legal move. This implies that $\mob(K_n \cp K_m) < \gp(K_n\cp K_m) = n + m - 2$. 
		
		To complete the proof, we are going to show that there exists a mobile general position set of cardinality $n + m - 3$. Station $n+m-3$ robots on the vertices of $S = \{(2,1), (3,1), \ldots, (n,1)\} \cup \{(1,3), (1,4), \ldots, (1,m)\}$. This set is a general position set, since it is a subset of the canonical gp-set of $K_n\cp K_m$, as illustrated in Fig.~\ref{fig:K_7-byK_5}. Then we perform the following sequence of moves:  
		\begin{itemize}
			\item $(2,1) \move (2,2)$, $(3,1) \move (3,2)$, $\ldots$, $(n,1) \move (n,2)$, 
			\item $(1,3) \move (1,1)$.
		\end{itemize}
		Observe that each of these moves is legal. Thus, the new set occupied by the robots is a general position set of $K_n\cp K_m$. Next, this process of legal moves can be repeated $m-2$ times, i.e.\ for $2 \leq j \leq m-1$ perform the sequence \[ (2,j) \move (2,j+1),(3,j) \move (3,j+1),\ldots ,(n,j)\move (n,j+1),(1,j+2) \move (1,j-1) ,\] (skipping the final undefined move). In this way, the robots initially positioned at $(2,1), (3,1), \ldots, (n,1)$ will visit the vertices of the set $\{2, 3, \ldots, n\} \times [m]$ by legal moves, while the remaining vertices can be visited by legal moves by the robots initially positioned at $(1,3), (1,4), \ldots, (1,m)$. This concludes our argument for the existence of a mobile general position set of cardinality $n + m - 3$.
	\end{proof}
	
	By Theorem~\ref{thm:Hamming-graphs} we have $\mob (K_n \cp K_n) = 2n-3$, whilst it follows from~\cite[Corollary 4.4]{tian-2024+} that $\gpo(K_n\cp K_n) = n$, so that the mobile general position number of a graph can be arbitrarily larger than the outer general position number. Theorem~\ref{thm:Hamming-graphs} also demonstrates that the mobile general position number of a Cartesian product can be arbitrarily larger than both bounds in Proposition~\ref{prop:bounds-for-cp}.
	
	Finally, we give an example (Cartesian products of stars) that shows that the mobile general position number of a non-trivial Cartesian product can be arbitrarily smaller than its general position number. 
	
	\begin{proposition}
		For all $r \geq 1$, there exist graphs $G, H$ for which \[ \gp(G \cp H) - \mob(G \cp H) = r.\]
	\end{proposition}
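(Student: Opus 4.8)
I plan to exhibit the required pair among Cartesian products of stars, taking $G = K_{1,n}$ and $H = K_{1,m}$ with $n\ge m\ge 2$; writing the centre of each star as $0$ and its leaves as $1,\dots ,n$ (resp.\ $1,\dots ,m$), I will establish
$$\gp(K_{1,n}\cp K_{1,m}) = n+m \qquad\text{and}\qquad \mob(K_{1,n}\cp K_{1,m}) = n+1,$$
so that $\gp-\mob = m-1$. Choosing $n=m=r+1$ then yields a product of two stars with $\gp-\mob = r$ for every $r\ge 1$, the base case $r=1$ being $P_3\cp P_3$. Throughout I use that the distance is additive, $d((a,b),(a',b')) = d_{K_{1,n}}(a,a') + d_{K_{1,m}}(b,b')$, and that in a star a leaf is at distance $1$ from the centre and $2$ from every other leaf.

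For the general position number, the bound $\gp\ge n+m$ comes from the \emph{axis set} $\{(i,0):i\in[n]\}\cup\{(0,j):j\in[m]\}$: any geodesic between two of these vertices meets only the centre $(0,0)$ or a corner $(i,j)$ ($i,j\ne 0$), neither of which lies in the set, so it is in general position. For the matching upper bound I would classify the vertices of a gp-set $X$ as the centre, the \emph{axis} vertices $(i,0),(0,j)$, and the \emph{corners} $(i,j)$, and record the relevant betweenness facts: $(i,0)$ lies between any two corners of its row, the centre lies between any two axis vertices, and for axis vertices $(i,0),(0,j)$ the corner $(i,j)$ lies on a geodesic joining them (so these three cannot all be in $X$). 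A short case analysis on $|X\cap\{(i,0):i\}|$ and $|X\cap\{(0,j):j\}|$, combined with the fact that the corners alone form a general position set of size at most $n+m-2$, then caps $|X|$ at $n+m$, with equality forcing $X$ to be the axis set.

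For the mobile general position number the upper bound is the clean key step. Since every vertex, and in particular the centre, must be visited, consider the configuration at a step when some robot occupies $(0,0)$. For general position, no two of the remaining robots may have the centre between them; as the centre lies between $u$ and $u'$ exactly when they share \emph{no} nonzero coordinate, the other robots pairwise share a nonzero row or column, and a short argument shows such a family must lie inside a single $G$-layer or a single $H$-layer. That layer is a star, so at most $\max\{n,m\}=n$ of its vertices are in general position; moreover its central vertex (a $(0,j)$ or $(i,0)$) is excluded, since it would lie between the centre and a corner of the layer. Hence at most $n$ robots accompany the one on the centre, giving $\mob\le n+1$.

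The matching lower bound $\mob\ge n+1$ requires an explicit configuration of $n+1$ robots together with a covering sequence of legal moves, and this is the step I expect to be the main obstacle. The idea is to keep $n$ robots on the corners $\{(i,1):i\in[n]\}$ of a single $G$-layer (a copy of $K_{1,n}$ in general position) while one extra robot roams, and to propagate a rotation in which robots peel off through the axis vertices $(i,0)$ to sweep successive rows and the centre, mirroring the explicit dance I have verified for $P_3\cp P_3$; the only genuine work is the routine check that each individual move preserves general position. Combining the two bounds gives $\mob=n+1$, and with $n=m=r+1$ we obtain $\gp(K_{1,r+1}\cp K_{1,r+1}) - \mob(K_{1,r+1}\cp K_{1,r+1}) = 2(r+1)-(r+2) = r$, as required.
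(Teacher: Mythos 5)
Your choice of example is exactly the paper's: Cartesian products of stars, with $\gp(K_{1,k}\cp K_{1,k})=2k$ and $\mob(K_{1,k}\cp K_{1,k})=k+1$ (the paper fixes $n=m=k$ and cites the value of $\gp$ from the literature on products of trees rather than reproving it, which you could also do to shorten your first part). Your upper bound $\mob\le n+1$ is correct and is a clean variant of the paper's argument: both focus on the moment a robot occupies $(0,0)$, observe that the remaining robots must pairwise share a nonzero coordinate, and conclude they lie in a single layer; the paper additionally invokes the fact that adjacent pairs are maximal general position sets in products of trees to exclude axis vertices, whereas you exclude them via convexity of layers and $\gp(K_{1,m})=m$. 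Either route is fine.

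The genuine gap is the lower bound $\mob(K_{1,n}\cp K_{1,n})\ge n+1$, which you explicitly defer: without an exhibited configuration and a verified covering sequence of legal moves, the proposition is not proved, since this is precisely the content of the claim that the set is \emph{mobile}. Your sketch ("keep $n$ robots on the corners $(i,1)$ while one extra robot roams") also needs care, because a single roamer cannot sweep the graph alone while the other $n$ robots sit still --- for instance the roamer cannot enter the row $\{(i,j):j\}$ of a stationary corner robot $(i,1)$ at a vertex $(i,j)$ with $j\ne 0,1$ without first displacing robots, as $(i,1)$ then blocks or is blocked on geodesics to other corners; the paper instead moves \emph{all} robots each round: starting from $\{(0,0)\}\cup([k]\times\{1\})$ it performs $(0,0)\move(0,2)$, then $(i,1)\move(i,0)$ for $i\in[2,k]$, then $(0,2)\move(1,2)$, and repeats up to relabelling, with one extra short sequence to reach $(0,1)$. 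Something of this explicit form (with the general position check at each step) is what your proof still needs before the two bounds can be combined.
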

	
	\begin{proof}
		For $k \geq 2$, let $V(K_{1,k}) = \{0\} \cup [k]$, with $0$ being the vertex of degree $k$. It follows from~\cite[Theorem 1]{tian-2021b} that $\gp (K_{1,k} \cp K_{1,k}) = 2k$. We will show that $\mob(K_{1,k} \cp K_{1,k}) = k+1$, so that $\gp(K_{1,k} \cp K_{1,k}) - \mob(K_{1,k} \cp K_{1,k}) = k-1$ and the result follows on setting $k = r+1$.
		
		Let $S$ be any mobile general position set of $K_{1,k} \cp K_{1,k}$. Any pair of adjacent vertices in the Cartesian product of trees is a maximal general position set~\cite{Welton}, so if $|S| > 2$ the robots must always occupy an independent set. We can start at the stage that a robot is stationed at $(0,0)$. If there are robots at vertices $(u_1,v_1)$ and $(u_2,v_2)$ with $u_1,v_1,u_2,v_2 \in [k]$, $u_1 \neq u_2$ and $v_1 \neq v_2$, then there would be a shortest $(u_1,v_1),(u_2,v_2)$-path through $(0,0)$; hence all the remaining robots lie in a set $\{ i\} \times [k]$ or $[k] \times \{ j\}$ for some $i,j \in [k]$. In either case, we have $|S| \leq k+1$.
		
		Finally, we show that $\mob(K_{1,k} \cp K_{1,k}) \geq k+1$. Consider the set $S = \{ (0,0) \} \cup ([k] \times \{ 1\} )$. First, move $(0,0) \move (0,2)$, followed by $(i,1) \move (i,0)$ for $i \in [2, k]$ and then $(0,2) \move (1,2)$. By relabelling as necessary, we see that each vertex except for $(0,1)$ may be visited using such a sequence. For the remaining vertex, from the initial configuration $S = \{ (0,0) \} \cup ([k] \times \{ 1\} )$ perform $(0,0) \move (0,2)$, $(i,1) \move (i,0)$ for $i \in [2,k]$, and finally $(1,1) \move (0,1)$. Therefore, $S$ is a mobile general position set.
	\end{proof}

	
	\section{Cartesian products with paths}
	\label{sec:prisms}
	
	In this section we continue our exposition with exact values of the mobile general position number for some Cartesian products involving paths, including prism graphs, i.e.\ products $G \cp P_2$. The result for prisms of complete graphs is contained in Proposition~\ref{prop:clique cp path} and Theorem~\ref{thm:Hamming-graphs}. We begin with the exact value of $\mob (T \cp K_2)$, where $T$ is a tree. It follows from Proposition~\ref{prop:bounds-for-cp} that for any graph $G$ the mobile general position number of a prism satisfies $\mob (G) \leq \mob (G \cp K_2) \leq 2\gp (G)$. Since the mobile general position number of a tree is just two, Theorem~\ref{thm:prism-tree} shows that $\mob (G \cp K_2)$ can be arbitrarily larger than $\mob (G)$. 
	
	\begin{theorem}
		\label{thm:prism-tree}
		For any tree $T$ with order at least three, $\mob (T \cp K_2) = \ell (T)$.
	\end{theorem}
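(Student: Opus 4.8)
The plan is to prove the two inequalities separately. The lower bound $\mob(T \cp K_2) \ge \ell(T)$ is immediate: since $\gpo(T) = \ell(T)$ for every tree, Proposition~\ref{prop:bounds-for-cp}(ii) applied with $H = K_2$ gives $\mob(T \cp K_2) \ge \gpo(T) = \ell(T)$ (and the construction in that proof indeed sweeps both layers). All the difficulty lies in the upper bound $\mob(T \cp K_2) \le \ell(T)$.

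For the upper bound I would first analyse the structure of an arbitrary general position set $S$ of $T \cp K_2$ with at least three robots. Writing $A$ and $B$ for the projections to $T$ of the robots in the two $T$-layers, a short argument using induced $4$-cycles shows that if some vertical edge $\{(v,0),(v,1)\}$ is fully occupied then $|S|=2$; hence for $|S|\ge 3$ the sets $A,B$ are \emph{disjoint}, each is a general position set of $T$, and—by computing when $(v,b)$ lies on a geodesic between $(u,a)$ and $(w,c)$ using distance additivity in the product—the pair $(A,B)$ satisfies the separation property
$$(\ast)\qquad \text{no occupied vertex lies strictly between an } A\text{-vertex and a } B\text{-vertex in } T.$$
Reading $(\ast)$ through the components of $T-x$ for an occupied vertex $x$ yields the crucial dichotomy: if a token $x$ is interior to the Steiner tree $\mathrm{St}(A\cup B)$, then all other tokens lie in a single component of $T-x$, forcing one colour class to have at most one element. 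Consequently, if $|A|\ge 2$ and $|B|\ge 2$ then every token is a leaf of $\mathrm{St}(A\cup B)$, so $|S|$ is at most the number of leaves of that subtree, which is at most $\ell(T)$; otherwise $\min\{|A|,|B|\}\le 1$ forces $|S|\le \ell(T)+1$. This already yields $\gp(T\cp K_2)\le \ell(T)+1$ and reduces the theorem to excluding $|S|=\ell(T)+1$.

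The heart of the argument is a mobility step ruling out $\ell(T)+1$ robots. If $|S|=\ell(T)+1$ then at every moment we are in the second case, so exactly one colour class is a single \emph{minority} token and the other consists of $\ell(T)$ tokens forming a \emph{maximum} general position set of $T$. I would then show that \textbf{no recolouring (layer-changing) move is ever legal}: recolouring the minority token would create a colour class of size $\ell(T)+1>\gp(T)$, while recolouring a majority token would produce a configuration with both classes of size at least two and more than $\ell(T)$ robots, contradicting the structural bound above (when $\ell(T)=2$ one instead verifies directly that $(\ast)$ fails). Hence each robot stays in its layer throughout the process, and the majority layer must be swept by $\ell(T)$ robots that always form a maximum $\gp$-set of $T$.

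The final step, which I expect to be the main obstacle, is to exhibit a vertex that can never be visited. When $\ell(T)\ge 3$ the tree has a vertex $u$ of degree at least three, and I would prove that such $u$ lies in \emph{no} maximum general position set: if $u$ belonged to a $\gp$-set $S'$ with $|S'|=\ell(T)$, then $u$ is a leaf of $\mathrm{St}(S')$, so $S'\setminus\{u\}$ lies in one component $D$ of $T-u$, whence $|S'|\le \gp\big(T[\{u\}\cup D]\big)=1+(\text{number of leaves of }T\text{ in }D)<\ell(T)$, because the at least two remaining components of $T-u$ each contain a further leaf of $T$. Thus $(u,0)$ is never occupied in the majority layer, contradicting coverage. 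When $\ell(T)=2$, that is $T$ is a path, every vertex does lie in a maximum $\gp$-set, so here I would argue instead on the minority layer: property $(\ast)$ pins the single minority token strictly between the two majority tokens along the path, so it can never reach either endpoint, leaving an extreme leaf unvisited in its layer. Either way we obtain a contradiction, so $\mob(T\cp K_2)\le \ell(T)$, and together with the lower bound this proves $\mob(T\cp K_2)=\ell(T)$.
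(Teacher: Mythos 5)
Your proof is correct and follows the same overall strategy as the paper: the lower bound via Proposition~\ref{prop:bounds-for-cp}(ii) and $\gpo(T)=\ell(T)$, and the upper bound by assuming $\ell(T)+1$ robots, forcing a $1+\ell(T)$ split across the two layers with no layer-changing move ever legal, and then exhibiting an unvisitable vertex --- a degree-$\ge 3$ vertex in the majority layer when $T$ is not a path, and an endpoint of the minority layer when $T$ is a path. The one genuine difference is how the split is forced: the paper argues dynamically that if both layers held at least two robots, all robots could be marched to leaf positions, whereupon the $\ell(T)$ leaves cannot accommodate $\ell(T)+1$ robots since no vertical pair $\{(w,0),(w,1)\}$ extends to a third robot; you instead prove a static structural dichotomy (any token interior to the Steiner tree of the occupied projections forces its own colour class to be a singleton, so two classes of size $\ge 2$ make every token a leaf of that subtree, bounding $|S|$ by $\ell(T)$). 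Your static route avoids having to justify that the robots can actually be moved to leaves by legal moves, which is the more delicate part of the paper's argument, at the cost of a slightly longer geodesic computation in the product; both correctly reduce to the same endgame.
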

	
	\begin{proof}
		By Proposition~\ref{prop:clique cp path}, we can assume that $\ell (T) \geq 3$. As remarked in Section~\ref{sec:intro}, $\gpo(T) = \ell (T)$, so by Proposition~\ref{prop:bounds-for-cp}(ii) we have $\mob (T \cp K_2) \geq \ell (T)$. We label the vertices of $K_2$ by $0,1$. Suppose that at least $\ell (T)+1$ robots can traverse $T \cp K_2$ in general position. Since $\gp (T) = \ell (T)$ and each $T$-layer in $T \cp K_2$ is a convex subgraph, neither $T$-layer can contain $> \ell (T)$ robots at any stage, that is, each layer $V(T) \times \{ 0\} $ and $V(T) \times \{ 1\} $ must contain at least one robot at any time. Trivially we can assume that at least one layer contains two or more robots. 
		
		Suppose that the layer $T^1$ contains at least two robots. If not all of the robots in $T^1$ are already stationed at leaves of $T$, then we may suppose that a robot is at a vertex $(u,1)$, where $u$ is a cut-vertex of $T$. Let $T_1,\dots ,T_k$ be the components of $T-u$. As the set of robots is in general position, one of the sets $V(T_i) \times \{ 1\}$ must contain the remaining robots of $T^1$; without loss of generality, suppose that these other robots are in $V(T_1) \times \{ 1\}$. Considering the shortest paths to the robots in $V(T_1) \times \{ 1\} $, we see that there cannot be robots positioned at any vertex from $(\{ u\} \cup \bigcup _{i=2}^kV(T_i)) \times \{ 0\} $. Therefore, the robot at $(u,1)$ can be moved by a sequence of legal moves to $(v,1)$, where $v$ is a leaf of $T$ lying in $T_2$. In this fashion, if both layers $T^i$, $i \in \{0,1\}$, contained at least two robots, then all of these robots could be moved to vertices corresponding to leaves of $T$. However, if $w$ is any leaf of $T$, then we cannot have robots at both $(w,0)$ and $(w,1)$, as this constitutes a maximal general position set of $T \cp K_2$. Therefore, in this case, we conclude that there are at most $\ell (T)$ robots in $T \cp K_2$, a contradiction. 
		
		It follows that there must be a layer, say $T^0$, that contains just one robot $R$, and $\ell (T)$ robots lie in $T^1$, which we can assume to start at the leaves of $T^1$. By the preceding argument, $R$ cannot move to the layer $T^1$ and no robot in $T^1$ can move to $T^0$. If $T$ is a path $P_n$, then we are left with three robots: two positioned at vertices corresponding to leaves of the layer $P_n^1$, and the third robot located at an internal vertex of the path layer $P_n^0$. It is now readily observed that no robot can visit the vertices corresponding to leaves of $P_n^0$. Hence, we may assume that $T$ is not a path. Let $z$ be any vertex of $T$ with degree at least three. No robot in $T^1$ can visit $(z,1)$ without creating three-in-a-line within $T^1$, and robot $R$ cannot leave $T^0$ to visit $(z,1)$, a contradiction. We conclude that $T \cp K_2$ can hold at most $\ell (T)$ robots.   
	\end{proof}

	By Theorem~\ref{thm:prism-tree} we have $\mob(P_n\cp P_2) = 2$ for $n\ge 2$. We next complement this result by considering products of two paths each of order at least three. 
	
	\begin{theorem}
		\label{thm:grids}
		If $n, m\ge 3$, then $\mob(P_n\cp P_m) = 3$.
	\end{theorem}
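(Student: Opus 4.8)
The plan is to prove the two inequalities separately, exploiting the fact that in a grid the geodesics are exactly the monotone lattice paths. Writing the vertices of $P_n\cp P_m$ as pairs $(a,b)$ with $a\in[n]$ and $b\in[m]$, the distance is $|a_1-a_2|+|b_1-b_2|$, so $(a,b)$ lies on a shortest $(a_1,b_1),(a_2,b_2)$-path if and only if $a$ lies (weakly) between $a_1$ and $a_2$ and $b$ lies (weakly) between $b_1$ and $b_2$. Thus a set is in general position precisely when no vertex lies in the \emph{bounding box} of two others, and I would record this as a preliminary observation and use it throughout.

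For the lower bound $\mob(P_n\cp P_m)\ge 3$ I would exhibit an explicit three-robot sweep. The workhorse is the \emph{peak} configuration $\{(j-1,1),(j,c),(j+1,1)\}$ with $2\le j\le n-1$ and $c\ge 2$: since the apex $(j,c)$ lies strictly above both feet, the observation shows this is a general position set. Parking the apex in column $j$ and moving it up and down visits all of $\{j\}\times[2,m]$, while the feet visit row $1$. The peak is shifted one column to the right by the legal sequence $(j+1,1)\move(j+2,1)$, then $(j,c)\move(j+1,c)$, then $(j-1,1)\move(j,1)$, each intermediate configuration remaining a peak and hence in general position. Sweeping $j$ from $2$ to $n-1$ then covers all of row $1$ together with every cell of columns $2,\dots,n-1$; the upper parts of the two extreme columns and the top corners are mopped up by symmetric \emph{valley} configurations such as $\{(1,c),(2,1),(3,m)\}$, in which the central foot is a strict minimum and which is again in general position.

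For the upper bound I would argue that \emph{visiting a corner already forces at most three robots}. Suppose a robot occupies the corner $(1,1)$ (one exists since $n,m\ge 3$, and all four corners are symmetric). For any two other robots $p=(a_1,b_1)$ and $q=(a_2,b_2)$, the requirement that neither lies on a geodesic between $(1,1)$ and the other forces $p\not\le q$ and $q\not\le p$ in the componentwise order; hence the non-corner robots form an antichain, that is, a sequence strictly increasing in the first coordinate and strictly decreasing in the second. But any three points of such a decreasing staircase have their middle point inside the bounding box of the outer two, violating general position. So there are at most two non-corner robots and the whole configuration has at most three. Since every vertex, and in particular $(1,1)$, must be visited, and the configuration is a general position set at every step, the total number of robots is at most three.

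The upper bound is the clean conceptual heart and is short once the bounding-box observation is in place. I expect the real work to lie in the lower bound: the naive peak sweep misses the upper cells of the two boundary columns and the top corners, so one must introduce the auxiliary valley maneuvers and, more tediously, verify that all phases can be linked by single legal moves with every intermediate configuration in general position.
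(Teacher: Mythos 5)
Your overall skeleton is the same as the paper's: an upper bound via the fact that a four-point general position set of the grid cannot contain a corner, plus an explicit three-robot tour for the lower bound. Your upper bound is correct and complete, and in fact a little more self-contained than the paper's, which simply imports the corner-exclusion fact from the proof of Theorem~2.1 of the cited work on general position in Cartesian products; your antichain argument (the non-corner robots must be pairwise incomparable in the product order, and any three-element antichain in $[n]\times[m]$ has its middle point in the bounding box of the outer two) is a clean direct proof of that same fact. The bounding-box characterisation of geodesics that you set up at the start is exactly the right tool.

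The lower bound, however, is not yet a proof, and the part you defer is precisely where the difficulty sits. The peak sweep itself checks out, but the claim that the sets $\{1,n\}\times[2,m]$ are ``mopped up by symmetric valley configurations'' hides two problems. First, the transitions between phases are not routine: for example, from the peak $\{(1,1),(2,m),(3,1)\}$, sending the corner robot up the first column produces $\{(1,c),(2,m),(3,1)\}$, which fails at $c=m$ because $(2,m)$ then lies in the bounding box of $(1,m)$ and $(3,1)$; and the obvious repairs (lowering the apex first, or nudging $(3,1)$ to $(3,2)$) each create a new three-in-a-line. Second, even reaching your valley configuration $\{(1,c),(2,1),(3,m)\}$ from a peak requires getting a robot to $(3,m)$ while two others sit low in columns $1$--$3$, and the natural single moves for this are illegal. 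The paper sidesteps all of this by choosing a route that keeps one robot parked at $(2,m)$ while the other two sweep the extreme columns up to row $m-1$ and row $1$, and only at the very end moves to $\{(1,m),(n-1,1),(n,c)\}$, a configuration in which the column-$n$ robot can climb all the way to $(n,m)$. So your construction can almost certainly be completed, but as written the legal-move sequence covering $\{1,n\}\times[2,m]$ and the two top corners is missing, and supplying it is not mere bookkeeping.
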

	
	\begin{proof}
		Let $V(P_k) = [k]$, so that $V(P_n\cp P_m) = [n]\times [m]$. Consider an arbitrary general position set $S$ of $P_n\cp P_m$ with $|S| = 4$. Then from the proof of~\cite[Theorem 2.1]{klavzar-2021} we deduce that none of the corner vertices $(1,1)$, $(1,m)$, $(n,1)$ and $(n,m)$ belongs to $S$. Hence, no sequence of legal moves for any configuration of four robots in general position in $P_n\cp P_m$ can visit any of the vertices $(1,1)$, $(1,m)$, $(n,1)$ and $(n,m)$. Thus, $\mob(P_n\cp P_m) \le 3$.
		
		To prove that $\mob(P_n\cp P_m) \ge 3$, we start with three robots positioned at the general position set $S = \{(1,1), (n,1), (2,m)\}$. We next describe a sequence of legal moves for the three robots. 
		\begin{itemize}
			\item $(1,1)$ moves to all the vertices from $\{ 1\} \times [m-1]$ and returns back to $(1,1)$.  
			\item $(n,1)$ moves to all the vertices from $\{n\} \times [m-1]$ and returns back to $(n,1)$.  
			\item $(2,m)$ moves to all the vertices from $[2, n-1] \times [2, m]$ and returns back to $(2,m)$.  
			\item $(n,1) \move (n,2)$. After this, $(1,1)$ moves to vertices $(2,1), \dots, (n-1,1)$. Notice that at this point the robots are at vertices $(2,m)$, $(n-1,1)$ and $(n,2)$. Moreover, by this stage, all the vertices apart from $(1,m)$ and $(n,m)$ have already been visited.   
			\item $(2,m) \move (1,m)$ and $(n,2) \move(n,3) \move \cdots \move (n,m)$.
		\end{itemize}
		Notice that all these moves are legal, which demonstrates that $\mob(P_n\cp P_m) \ge 3$ and hence 
		$\mob(P_n\cp P_m) = 3$ when $n, m\ge 3$.
	\end{proof}
	
	By contrast, for infinite grids $P_{\infty } \cp P_{\infty }$ we have equality with the general position number.
	
	\begin{theorem}\label{thm:infinitegrid}
		If $P_\infty$ is the two-way infinite path, then $\mob(P_\infty\cp P_\infty) = 4$.
	\end{theorem}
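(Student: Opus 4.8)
The plan is to prove the two inequalities $\mob(P_\infty\cp P_\infty)\le 4$ and $\mob(P_\infty\cp P_\infty)\ge 4$ separately: the first through the general position number, and the second by an explicit sweeping strategy that exploits the absence of corner vertices (which is exactly what forced $\mob=3$ in the finite case of Theorem~\ref{thm:grids}).

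For the upper bound I would first record that distances in $P_\infty\cp P_\infty$ are Manhattan distances, so a vertex $w=(w_1,w_2)$ lies on a shortest $u,v$-path if and only if $w$ is coordinatewise between $u$ and $v$, that is, $\min(u_k,v_k)\le w_k\le\max(u_k,v_k)$ for $k\in\{1,2\}$. Consequently a set is in general position precisely when no three of its vertices form a three-element chain in the product order $\le$ or in the order $\le'$ obtained from $\le$ by reversing the second coordinate. Given any five vertices, sorting them by their first coordinate and applying the Erd\H{o}s--Szekeres theorem to the second coordinates (or Mirsky's theorem to the resulting height-two poset) yields three of them that are monotone, hence a forbidden chain. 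Thus $\gp(P_\infty\cp P_\infty)\le 4$, while the ``diamond'' $\{(0,1),(1,0),(0,-1),(-1,0)\}$ shows equality; since $\mob(G\cp H)\le\gp(G\cp H)$ always, this gives $\mob(P_\infty\cp P_\infty)\le 4$.

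For the lower bound I would exhibit four robots that between them visit every vertex while staying in general position, organised into phases indexed by $M\ge 2$. In phase $M$ three robots are parked in a ``valley'' at $(-M,-M)$, $(0,-2M)$ and $(M,-M)$ (these three are always in general position), and the fourth robot $A$ roams above them. A direct check of the box criterion shows that, with these three guards fixed, the configuration $\{A,(-M,-M),(0,-2M),(M,-M)\}$ is in general position for every $A$ in the half-infinite strip $\{(x,y):|x|\le M-1,\ y\ge -M+1\}$; since only $A$ moves, every single step inside this strip is a legal move. Robot $A$ therefore snakes boustrophedon-style through the finite box $|x|\le M-1$, $-M+1\le y\le M$, visiting all of its cells, and finishes at $(0,M+1)$.

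To pass from phase $M$ to phase $M+1$ I would shift the three guards outward to $(-M-1,-M-1)$, $(0,-2M-2)$, $(M+1,-M-1)$ one unit at a time while $A$ waits at $(0,M+1)$; throughout this the guards retain their valley shape and the roamer's admissible strip only grows, so each intermediate configuration is again in general position. Taking the phase-$2$ configuration (with $A$ at $(0,2)$) as the initial $\mob$-set and iterating, each vertex $(x,y)$ is visited in every phase with $M\ge\max(|x|+1,\,y,\,1-y)$, so all vertices are eventually reached and $\mob(P_\infty\cp P_\infty)\ge 4$. The main technical obstacle is pure bookkeeping: verifying that the general position (box) criterion genuinely holds for all roamer positions in the strip, and for every intermediate configuration during the guard-shifting transitions. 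These reduce to the two case types ``the roamer lies between two guards'' and ``a guard lies between the roamer and another guard'', each of which is routine to dispatch.
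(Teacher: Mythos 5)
Your proof is correct, but your lower-bound construction is genuinely different from the paper's. For the upper bound both arguments pass through $\mob \le \gp$ and the fact that $\gp(P_\infty\cp P_\infty)=4$; the paper simply cites this from the literature, whereas you rederive it via the Manhattan-distance ``box'' criterion and Erd\H{o}s--Szekeres, which makes the proof self-contained (just take care with ties in the first coordinate, which is why the weak product order is the right one to use). For the lower bound the paper uses a much more economical idea: it takes the diamond $N(i,j)=\{(i\pm1,j),(i,j\pm1)\}$ as the mobile configuration and shows that four legal moves, performed in a suitable order, translate the entire diamond one unit in any of the four coordinate directions, so the diamond can be parked over every vertex of the grid. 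Your scheme instead splits the four robots asymmetrically into three ``valley'' guards plus one roamer that sweeps ever-larger boxes, with the guards retreating between phases. I checked your box-criterion claims for the strip $\{(x,y): |x|\le M-1,\ y\ge -M+1\}$ and for representative intermediate configurations of the guard shift (note you need $M\ge 2$ so that $-2M<-M-1$, which your choice of starting phase already ensures), and they hold, so the approach works; the cost is substantially more case analysis and an unbounded guard excursion, while the benefit is that a single robot does all the visiting, a feature that would be relevant for the stricter ``every robot visits every vertex'' variant mentioned in the introduction. The paper's translation-of-a-gp-set trick is the one worth internalising, as it reappears (in layer form) in Lemma~\ref{lem:wanderingcylinder} and in the torus and cylinder arguments.
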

	
	\begin{proof}
		We first recall from \cite[Corollary 3.2]{Manuel-2018b} that $\gp(P_\infty\cp P_\infty) = 4$. Hence, it remains to show that $\mob(P_\infty\cp P_\infty) \ge 4$. To do so, set $V(P_\infty)=\mathbb{Z}$ and let $(i,j)\in V(P_\infty\cp P_\infty)$. We claim that the set $N(i,j) =  \{(i-1,j),(i+1,j),(i,j-1),(i,j+1)\}$ is a mobile general position set of $P_\infty\cp P_\infty$. First, from the proof of \cite[Corollary 3.2]{Manuel-2018b}, we know that any such set $N(i,j)$ is in general position. Next, observe that the sequence of moves $(i+1,j)\move (i+2,j)$, $(i,j+1)\move (i+1,j+1)$, $(i,j-1)\move (i+1,j-1)$ and $(i-1,j)\move (i,j)$ is a sequence of legal moves from robots positioned at the set $N(i,j)$. This sequence moves the four robots one coordinate to the right, leaving robots at $N(i+1,j)$. Fig.~\ref{fig:grids} shows a gp-set of the grid $P_\infty\cp P_\infty$, and one round of moves as just described. The order of the moves is shown by the numeric order in the figure. We can analogously move the four robots in each of the remaining three directions in the infinite grid. In this way, every vertex of the infinite grid is eventually occupied by some robot. Thus, the conclusion follows.
	\end{proof}

	\begin{figure}[ht!]
		\centering
		\begin{tikzpicture}[scale=0.8, vertex_style/.style={circle, ball color=black},vertex_style_1/.style={circle, ball color=red},
			edge_style/.style={thick, black,drop shadow={opacity=0.4}}]
			\def\size{\footnotesize}
			\draw[edge_style] \foreach \x in {1,...,7} \foreach \y in {1,...,6} {
				(\x,\y) -- (\x+1,\y)
			};
			\draw[edge_style] \foreach \x in {1,...,8} \foreach \y in {1,...,5} {
				(\x,\y) -- (\x,\y+1)
			};
			\draw \foreach \x in {1,...,8} \foreach \y in {1,...,6}{
				node[vertex_style] (\x\y) at (\x,\y) {}
			};
			\draw[dotted] \foreach \x in {1,...,8}{
				(\x,0) -- (\x,1)
				(\x,6) -- (\x,7)
			};
			\draw[dotted] \foreach \y in {1,...,6}{
				(0,\y) -- (1,\y)
				(8,\y) -- (9,\y)
			};
			\draw {node[vertex_style_1] at (4,2) {}};
			\draw {node[vertex_style_1] at (4,4) {}};
			\draw {node[vertex_style_1] at (3,3) {}};
			\draw {node[vertex_style_1] at (5,3) {}};
			
			\draw[ultra thick, color=red,->] (53) -- (63);
			\draw[ultra thick, color=red,->] (33) -- (43);
			\draw[ultra thick, color=red,->] (44) -- (54);
			\draw[ultra thick, color=red,->] (42) -- (52);
			
			\draw[thick] {node at (5.4,3.5) {1}};
			\draw[thick] {node at (4.5,2.5) {3}};
			\draw[thick] {node at (4.5,4.5) {2}};
			\draw[thick] {node at (3.5,3.5) {4}};
		\end{tikzpicture}
		\hspace*{0.5cm}
		\begin{tikzpicture}[scale=0.8, vertex_style/.style={circle, ball color=black},vertex_style_1/.style={circle, ball color=red},
			edge_style/.style={thick, black,drop shadow={opacity=0.4}}]
			\def\size{\footnotesize}
			\draw[edge_style] \foreach \x in {1,...,7} \foreach \y in {1,...,6} {
				(\x,\y) -- (\x+1,\y)
			};
			\draw[edge_style] \foreach \x in {1,...,8} \foreach \y in {1,...,5} {
				(\x,\y) -- (\x,\y+1)
			};
			\draw \foreach \x in {1,...,8} \foreach \y in {1,...,6}{
				node[vertex_style] (\x\y) at (\x,\y) {}
			};
			\draw[dotted] \foreach \x in {1,...,8}{
				(\x,0) -- (\x,1)
				(\x,6) -- (\x,7)
			};
			\draw[dotted] \foreach \y in {1,...,6}{
				(0,\y) -- (1,\y)
				(8,\y) -- (9,\y)
			};
			\draw {node[vertex_style_1] at (5,2) {}};
			\draw {node[vertex_style_1] at (5,4) {}};
			\draw {node[vertex_style_1] at (4,3) {}};
			\draw {node[vertex_style_1] at (6,3) {}};
			
		\end{tikzpicture}
		\caption{The legal moves in the infinite grid.}\label{fig:grids}
	\end{figure}

	We now find the exact value of the mobile general position number for some cylinder graphs $C_r \cp P_s$. The general position numbers of the cylinder graphs are given in~\cite{klavzar-2021} as
	
	$$\gp (C_r\cp P_s) = \left\{
	\begin{array}{ll}
		3;     &  r = 3, s = 2, \\
		5;     &  r = 7 \text{ or } r \geq 9, \text{ and } s \geq 5, \\
		4;     & {\rm otherwise}.
	\end{array} \right.
	$$
	Note that Proposition~\ref{prop:clique cp path} gives $\mob (C_3 \cp P_s) = 3$ for $s \geq 2$. We begin with the prism graphs $C_r \cp P_2$.
	
	\begin{theorem}
		\label{thm:prisms-cycles}
		If $n\ge 3$, then 
		$$\mob(C_n\cp K_2) = \left\{
		\begin{array}{ll}
			3;     &  n=3, \\
			2;     &  n=4, \\
			4;     & {\rm otherwise}.
		\end{array} \right.
		$$
	\end{theorem}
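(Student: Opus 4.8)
The plan is to treat the three cases separately. The value for $n=3$ is immediate: since $C_3\cong K_3$, Proposition~\ref{prop:clique cp path} gives $\mob(C_3\cp K_2)=\mob(K_3\cp P_2)=3$. For the upper bounds when $n\ge 4$ I would invoke the formula for $\gp(C_r\cp P_s)$ recalled from~\cite{klavzar-2021}: taking $s=2$ the exceptional value $5$ never occurs (it requires $s\ge 5$), so $\gp(C_n\cp K_2)=4$ for every $n\ge 4$, and the trivial bound $\mob\le\gp$ yields $\mob(C_n\cp K_2)\le 4$. Throughout I would use the product geodesic criterion: since distance in a Cartesian product is the sum of the factor distances, a vertex $(i,a)$ lies on a shortest $(j,b),(k,c)$-path exactly when $i$ lies on a shortest $j,k$-path in $C_n$ \emph{and} $a$ lies on a shortest $b,c$-path in $K_2$ (the latter holding automatically when $b\neq c$, and only for $a=b=c$ when $b=c$). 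It then remains to pin down the value for $n=4$ and to supply a matching lower bound for $n\ge 5$.

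For $n=4$ I would use the isomorphism $C_4\cp K_2\cong Q_3$, the $3$-cube, and argue $\mob=2$. The lower bound is clear, since any two-element set is in general position and two robots can roam a connected graph and visit every vertex. For the upper bound the key is a rigidity lemma: in $Q_3$, if $u,w$ are adjacent then every other vertex $x$ satisfies $d(u,x)\ne d(w,x)$ (the two distances differ by exactly one), so one of $u,w$ lies on a shortest path between $x$ and the other; hence no general position set of size at least three can contain two adjacent vertices. Since moreover an antipodal pair puts every remaining vertex on a shortest path between them, every general position set of size $\ge 3$ must consist of pairwise distance-$2$ vertices, that is, it lies inside one of the two parity classes of $Q_3$. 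I would then show that such a set is frozen: moving any robot to an adjacent vertex places it in the opposite parity class at distance $1$ from at least one of the (at least two) remaining robots, recreating an adjacent pair and destroying general position by the lemma. A frozen set occupies at most four vertices of a single parity class and can never reach the other class, so it is not mobile; hence $\mob(C_4\cp K_2)\le 2$.

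For $n\ge 5$ I would exhibit an explicit mobile general position set of size $4$ that rotates around the prism. Writing $V(C_n)=\mathbb{Z}_n$, $V(K_2)=\{0,1\}$ and $m=\lceil n/2\rceil$, I would start from $S=\{(0,0),(1,1),(m,0),(m{+}1,1)\}$, two robots in each $C_n$-layer placed in two nearly antipodal blocks. Using the criterion above one checks $S$ is in general position: triples contained in a single $K_2$-coordinate are ruled out automatically (the middle vertex then has the wrong second coordinate), while for the mixed triples the cyclic positions $0,1,m,m{+}1$ are spread far enough that no first coordinate lies on a shortest arc between the other two. The heart of the construction is a round of four legal moves $(0,0)\move(1,0)$, $(1,1)\move(2,1)$, $(m,0)\move(m{+}1,0)$, $(m{+}1,1)\move(m{+}2,1)$ that shifts the whole configuration by $+1$ around the cycle. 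Because a rotation of $C_n$ carries a general position set to a general position set, the aligned stages need no checking, and only the intermediate, partially shifted configurations must be verified. Repeating this round $n$ times returns the robots to $S$ while each robot sweeps every position of its own layer, so all $2n$ vertices are visited; this gives $\mob(C_n\cp K_2)\ge 4$, and with the upper bound $\mob(C_n\cp K_2)=4$.

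I expect the main obstacle to be the verification that general position is preserved throughout each shifting round, and in particular the \emph{order} in which the four moves are performed. A naive simultaneous-looking shift transiently places two robots on a common rung (two robots sharing a first coordinate) or creates three vertices on a common shortest path, so the moves must be ordered to respect the dependency that a robot whose target shares a coordinate with another robot advances only after that coordinate has been vacated; this dictates moving $(1,1)$ before $(0,0)$ and $(m{+}1,1)$ before $(m,0)$. The sole additional subtlety arises when $m{+}2\equiv 0$, which within our range happens only for $n=5$ and forces $(0,0)$ to move before $(m{+}1,1)$ as well; I would dispatch the tight small cases $n=5$ and $n=6$ (where the two blocks are least separated and, for $n=6$, the two bottom robots become genuinely antipodal) by an explicit check, confirming that the uniform scheme and this ordering principle keep every intermediate configuration in general position.
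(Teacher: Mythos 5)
Your overall architecture matches the paper's: $n=3$ via Proposition~\ref{prop:clique cp path}, the upper bound for $n\ge 5$ from $\gp(C_n\cp K_2)=4$, and a lower bound from the very same starting configuration $\{(0,0),(1,1),(\lceil n/2\rceil,0),(\lceil n/2\rceil+1,1)\}$ rotated one step per round by the same four moves. Your $n=4$ argument is a genuinely different (and more self-contained) route: the paper simply asserts that the general position sets of sizes three and four are unique up to symmetry and frozen, whereas your $Q_3$ parity rigidity lemma actually proves that every general position set of size at least three sits in one parity class and admits no legal move. That part is correct and I would keep it.

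The gap is in the move order for the rotation round when $n$ is even. Your ordering principle is derived only from rung conflicts (two robots sharing a first coordinate), and the linear order it pins down in the $n=5$ analysis is $(1,1)\move(2,1)$, then $(0,0)\move(1,0)$, then $(m+1,1)\move(m+2,1)$, then $(m,0)\move(m+1,0)$, with $m=\lceil n/2\rceil$. For every even $n\ge 6$ this order fails after the second move: in the configuration $\{(1,0),(2,1),(m,0),(m+1,1)\}$ with $m=n/2$ we have $d\bigl((1,0),(m+1,1)\bigr)=m+1$, $d\bigl((1,0),(2,1)\bigr)=2$ and $d\bigl((2,1),(m+1,1)\bigr)=m-1$, so $(2,1)$ lies on a shortest $(1,0),(m+1,1)$-path and general position is lost even though no rung conflict occurs. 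Your plan to dispatch only $n=5$ and $n=6$ by explicit check would therefore miss the failure at $n=8,10,\dots$; the obstruction is a systematic odd/even phenomenon, not a small-case one. The paper's proof avoids it by using a different order for even $n$ (advance both robots of the second $C_n$-layer before either robot of the first layer), and that order in turn is illegal for $n=5$ because of the rung conflict you identified, which is exactly why the paper splits into two cases. So the construction is salvageable, but you must prove legality of two distinct move orders (one for odd $n$, one for even $n$) rather than one uniform scheme plus finitely many exceptions.
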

	
	\begin{proof}
		The case $C_3\cp K_2 = K_3\cp K_2$ has already been treated above. Up to symmetry, there are unique general position sets of $C_4 \cp K_2$ of cardinalities three and four, both of which are independent sets. However, in both cases no robot can move whilst maintaining the independence property, so that $\mob (C_4 \cp K_2) \leq 2$, and the equality trivially holds.
		
		We assume for the remainder of the proof that $n\ge 5$. It follows from~\cite[Theorem 3.2]{klavzar-2021} that $\gp(C_n\cp K_2) = 4$. Set $V(C_n) = \{ v_i:i \in \mathbb{Z}_n\} $ and $V(K_2) = [2]$. We separate the argument into two cases. 
		
		\medskip
		\noindent 
		{\bf Case 1}: $n$ is odd. \\ 
		Consider a set of four robots located at $S = \{(v_0,1), (v_{\lceil n/2\rceil}, 1), (v_1,2), (v_{\lceil n/2\rceil + 1}, 2)\}\,.$
		Then $S$ is a general position set. Moreover, consider the following sequence of four moves for the robots: 
		\begin{itemize}
			\item $(v_1,2) \move (v_2,2)$;  
			\item $(v_0,1) \move (v_1,1)$; 
			\item $(v_{\lceil n/2\rceil + 1}, 2) \move (v_{\lceil n/2\rceil + 2}, 2)$; 
			\item $(v_{\lceil n/2\rceil}, 1) \move (v_{\lceil n/2\rceil+1}, 1)$. 
		\end{itemize}
		Fig.~\ref{fig:C_5} shows this process for the case $C_5\cp K_2$. 
		
		\begin{figure}[ht!]
			\begin{center}
				\begin{tikzpicture}[scale=0.6,style=thick]
					\tikzstyle{every node}=[draw=none,fill=none]
					\def\vr{3pt} 
					
					\begin{scope}[yshift = 0cm, xshift = 0cm]
						\path (0,0.1) coordinate (v1);
						\path (-0.8,1) coordinate (v2);
						\path (-0.5,2) coordinate (v3);
						\path (0.5,2) coordinate (v4);
						\path (0.8,1) coordinate (v5);
						\path (0,-1.2) coordinate (u1);
						\path (-1.8,0.8) coordinate (u2);
						\path (-1.3,2.8) coordinate (u3);
						\path (1.3,2.8) coordinate (u4);
						\path (1.8,0.8) coordinate (u5);
						\draw (v1) -- (v2) -- (v3) -- (v4) -- (v5) -- (v1); 
						\draw (u1) -- (u2) -- (u3) -- (u4) -- (u5) -- (u1); 
						\foreach \i in {1,...,5}
						{
							\draw (v\i) -- (u\i);
						}
						\draw [->] (2,1.1) to[bend right=30] node {} (1.6,2.7);
						\foreach \i in {1,...,5}
						{
							\draw (v\i)  [fill=white] circle (\vr);
							\draw (u\i)  [fill=white] circle (\vr);
						}
						\draw (v1)  [fill=black] circle (\vr);
						\draw (v3)  [fill=black] circle (\vr);
						\draw (u2)  [fill=black] circle (\vr);
						\draw (u5)  [fill=black] circle (\vr);
					\end{scope}
					
					
					\begin{scope}[yshift = 0cm, xshift = 5cm]
						\path (0,0.1) coordinate (v1);
						\path (-0.8,1) coordinate (v2);
						\path (-0.5,2) coordinate (v3);
						\path (0.5,2) coordinate (v4);
						\path (0.8,1) coordinate (v5);
						\path (0,-1.2) coordinate (u1);
						\path (-1.8,0.8) coordinate (u2);
						\path (-1.3,2.8) coordinate (u3);
						\path (1.3,2.8) coordinate (u4);
						\path (1.8,0.8) coordinate (u5);
						\draw (v1) -- (v2) -- (v3) -- (v4) -- (v5) -- (v1); 
						\draw (u1) -- (u2) -- (u3) -- (u4) -- (u5) -- (u1); 
						\foreach \i in {1,...,5}
						{
							\draw (v\i) -- (u\i);
						}
						\draw [->] (0.25,0.1) to[bend right=30] node {} (0.8,0.7);
						\foreach \i in {1,...,5}
						{
							\draw (v\i)  [fill=white] circle (\vr);
							\draw (u\i)  [fill=white] circle (\vr);
						}
						\draw (v1)  [fill=black] circle (\vr);
						\draw (v3)  [fill=black] circle (\vr);
						\draw (u2)  [fill=black] circle (\vr);
						\draw (u4)  [fill=black] circle (\vr);
					\end{scope}
					
					
					\begin{scope}[yshift = 0cm, xshift = 10cm]
						\path (0,0.1) coordinate (v1);
						\path (-0.8,1) coordinate (v2);
						\path (-0.5,2) coordinate (v3);
						\path (0.5,2) coordinate (v4);
						\path (0.8,1) coordinate (v5);
						\path (0,-1.2) coordinate (u1);
						\path (-1.8,0.8) coordinate (u2);
						\path (-1.3,2.8) coordinate (u3);
						\path (1.3,2.8) coordinate (u4);
						\path (1.8,0.8) coordinate (u5);
						\draw (v1) -- (v2) -- (v3) -- (v4) -- (v5) -- (v1); 
						\draw (u1) -- (u2) -- (u3) -- (u4) -- (u5) -- (u1); 
						\foreach \i in {1,...,5}
						{
							\draw (v\i) -- (u\i);
						}
						\draw [->] (-1.85,0.5) to[bend right=30] node {} (-0.3,-1.2);
						\foreach \i in {1,...,5}
						{
							\draw (v\i)  [fill=white] circle (\vr);
							\draw (u\i)  [fill=white] circle (\vr);
						}
						\draw (v5)  [fill=black] circle (\vr);
						\draw (v3)  [fill=black] circle (\vr);
						\draw (u2)  [fill=black] circle (\vr);
						\draw (u4)  [fill=black] circle (\vr);
					\end{scope}
					
					
					\begin{scope}[yshift = 0cm, xshift = 15cm]
						\path (0,0.1) coordinate (v1);
						\path (-0.8,1) coordinate (v2);
						\path (-0.5,2) coordinate (v3);
						\path (0.5,2) coordinate (v4);
						\path (0.8,1) coordinate (v5);
						\path (0,-1.2) coordinate (u1);
						\path (-1.8,0.8) coordinate (u2);
						\path (-1.3,2.8) coordinate (u3);
						\path (1.3,2.8) coordinate (u4);
						\path (1.8,0.8) coordinate (u5);
						\draw (v1) -- (v2) -- (v3) -- (v4) -- (v5) -- (v1); 
						\draw (u1) -- (u2) -- (u3) -- (u4) -- (u5) -- (u1); 
						\foreach \i in {1,...,5}
						{
							\draw (v\i) -- (u\i);
						}
						\draw [->] (-0.75,1.9) to[bend right=30] node {} (-0.9,1.2);
						\foreach \i in {1,...,5}
						{
							\draw (v\i)  [fill=white] circle (\vr);
							\draw (u\i)  [fill=white] circle (\vr);
						}
						\draw (v5)  [fill=black] circle (\vr);
						\draw (v3)  [fill=black] circle (\vr);
						\draw (u1)  [fill=black] circle (\vr);
						\draw (u4)  [fill=black] circle (\vr);
					\end{scope}
					
					
					\begin{scope}[yshift = 0cm, xshift = 20cm]
						\path (0,0.1) coordinate (v1);
						\path (-0.8,1) coordinate (v2);
						\path (-0.5,2) coordinate (v3);
						\path (0.5,2) coordinate (v4);
						\path (0.8,1) coordinate (v5);
						\path (0,-1.2) coordinate (u1);
						\path (-1.8,0.8) coordinate (u2);
						\path (-1.3,2.8) coordinate (u3);
						\path (1.3,2.8) coordinate (u4);
						\path (1.8,0.8) coordinate (u5);
						\draw (v1) -- (v2) -- (v3) -- (v4) -- (v5) -- (v1); 
						\draw (u1) -- (u2) -- (u3) -- (u4) -- (u5) -- (u1); 
						\foreach \i in {1,...,5}
						{
							\draw (v\i) -- (u\i);
						}
						\foreach \i in {1,...,5}
						{
							\draw (v\i)  [fill=white] circle (\vr);
							\draw (u\i)  [fill=white] circle (\vr);
						}
						\draw (v5)  [fill=black] circle (\vr);
						\draw (v2)  [fill=black] circle (\vr);
						\draw (u1)  [fill=black] circle (\vr);
						\draw (u4)  [fill=black] circle (\vr);
					\end{scope}
					
				\end{tikzpicture}
			\end{center}
			\caption{Moving robots in $C_5\cp K_2$}
			\label{fig:C_5}
		\end{figure}
		
		\medskip
		\noindent 
		{\bf Case 2}: $n$ is even. \\ 
		Suppose now that the robots are positioned at $S = \{(v_0,1), (v_{n/2}, 1), (v_1,2), (v_{n/2 + 1}, 2)\}\,.$
		Again $S$ is a general position set. Moreover, consider the following sequence of moves: 
		\begin{itemize}
			\item $(v_1,2) \move (v_2,2)$;  
			\item $(v_{n/2 + 1}, 2) \move (v_{n/2 + 2}, 2)$; 
			\item $(v_0,1) \move (v_1,1)$; 
			\item $(v_{n/2}, 1) \move (v_{n/2+1}, 1)$. 
		\end{itemize}
		Fig.~\ref{fig:C_6} shows this process for the case $C_6\cp K_2$. 
		
		\begin{figure}[ht!]
			\begin{center}
				\begin{tikzpicture}[scale=0.6,style=thick]
					\tikzstyle{every node}=[draw=none,fill=none]
					\def\vr{3pt} 
					
					\begin{scope}[yshift = 0cm, xshift = 0cm]
						\path (0,0) coordinate (v1);
						\path (-0.5,1) coordinate (v2);
						\path (-0.5,2) coordinate (v3);
						\path (0,3) coordinate (v4);
						\path (0.5,2) coordinate (v5);
						\path (0.5,1) coordinate (v6);
						\path (0,-1) coordinate (u1);
						\path (-1.5,0.7) coordinate (u2);
						\path (-1.5,2.3) coordinate (u3);
						\path (0,4) coordinate (u4);
						\path (1.5,2.3) coordinate (u5);
						\path (1.5,0.7) coordinate (u6);
						\draw (v1) -- (v2) -- (v3) -- (v4) -- (v5) -- (v6) -- (v1); 
						\draw (u1) -- (u2) -- (u3) -- (u4) -- (u5) -- (u6) -- (u1); 
						\foreach \i in {1,...,6}
						{
							\draw (v\i) -- (u\i);
						}
						\draw [->] (1.7,0.9) to[bend right=30] node {} (1.7,2.1);
						\foreach \i in {1,...,6}
						{
							\draw (v\i)  [fill=white] circle (\vr);
							\draw (u\i)  [fill=white] circle (\vr);
						}
						\draw (v1)  [fill=black] circle (\vr);
						\draw (v4)  [fill=black] circle (\vr);
						\draw (u3)  [fill=black] circle (\vr);
						\draw (u6)  [fill=black] circle (\vr);
					\end{scope}
					
					
					\begin{scope}[yshift = 0cm, xshift = 5cm]
						\path (0,0) coordinate (v1);
						\path (-0.5,1) coordinate (v2);
						\path (-0.5,2) coordinate (v3);
						\path (0,3) coordinate (v4);
						\path (0.5,2) coordinate (v5);
						\path (0.5,1) coordinate (v6);
						\path (0,-1) coordinate (u1);
						\path (-1.5,0.7) coordinate (u2);
						\path (-1.5,2.3) coordinate (u3);
						\path (0,4) coordinate (u4);
						\path (1.5,2.3) coordinate (u5);
						\path (1.5,0.7) coordinate (u6);
						\draw (v1) -- (v2) -- (v3) -- (v4) -- (v5) -- (v6) -- (v1); 
						\draw (u1) -- (u2) -- (u3) -- (u4) -- (u5) -- (u6) -- (u1); 
						\foreach \i in {1,...,6}
						{
							\draw (v\i) -- (u\i);
						}
						\foreach \i in {1,...,6}
						{
							\draw (v\i)  [fill=white] circle (\vr);
							\draw (u\i)  [fill=white] circle (\vr);
						}
						\draw (v1)  [fill=black] circle (\vr);
						\draw (v4)  [fill=black] circle (\vr);
						\draw (u3)  [fill=black] circle (\vr);
						\draw (u5)  [fill=black] circle (\vr);
						\draw [->] (-1.7,2.1) to[bend right=30] node {} (-1.7,0.9);
						
					\end{scope}
					
					
					\begin{scope}[yshift = 0cm, xshift = 10cm]
						\path (0,0) coordinate (v1);
						\path (-0.5,1) coordinate (v2);
						\path (-0.5,2) coordinate (v3);
						\path (0,3) coordinate (v4);
						\path (0.5,2) coordinate (v5);
						\path (0.5,1) coordinate (v6);
						\path (0,-1) coordinate (u1);
						\path (-1.5,0.7) coordinate (u2);
						\path (-1.5,2.3) coordinate (u3);
						\path (0,4) coordinate (u4);
						\path (1.5,2.3) coordinate (u5);
						\path (1.5,0.7) coordinate (u6);
						\draw (v1) -- (v2) -- (v3) -- (v4) -- (v5) -- (v6) -- (v1); 
						\draw (u1) -- (u2) -- (u3) -- (u4) -- (u5) -- (u6) -- (u1); 
						\foreach \i in {1,...,6}
						{
							\draw (v\i) -- (u\i);
						}
						\foreach \i in {1,...,6}
						{
							\draw (v\i)  [fill=white] circle (\vr);
							\draw (u\i)  [fill=white] circle (\vr);
						}
						\draw (v1)  [fill=black] circle (\vr);
						\draw (v4)  [fill=black] circle (\vr);
						\draw (u2)  [fill=black] circle (\vr);
						\draw (u5)  [fill=black] circle (\vr);
						\draw [->] (0.3,0.1) to[bend right=30] node {} (0.6,0.75);
					\end{scope}
					
					
					\begin{scope}[yshift = 0cm, xshift = 15cm]
						\path (0,0) coordinate (v1);
						\path (-0.5,1) coordinate (v2);
						\path (-0.5,2) coordinate (v3);
						\path (0,3) coordinate (v4);
						\path (0.5,2) coordinate (v5);
						\path (0.5,1) coordinate (v6);
						\path (0,-1) coordinate (u1);
						\path (-1.5,0.7) coordinate (u2);
						\path (-1.5,2.3) coordinate (u3);
						\path (0,4) coordinate (u4);
						\path (1.5,2.3) coordinate (u5);
						\path (1.5,0.7) coordinate (u6);
						\draw (v1) -- (v2) -- (v3) -- (v4) -- (v5) -- (v6) -- (v1); 
						\draw (u1) -- (u2) -- (u3) -- (u4) -- (u5) -- (u6) -- (u1); 
						\foreach \i in {1,...,6}
						{
							\draw (v\i) -- (u\i);
						}
						\foreach \i in {1,...,6}
						{
							\draw (v\i)  [fill=white] circle (\vr);
							\draw (u\i)  [fill=white] circle (\vr);
						}
						\draw (v6)  [fill=black] circle (\vr);
						\draw (v4)  [fill=black] circle (\vr);
						\draw (u2)  [fill=black] circle (\vr);
						\draw (u5)  [fill=black] circle (\vr);
						\draw [->] (-0.2,3.0) to[bend right=30] node {} (-0.6,2.2);
					\end{scope}
					
					
					\begin{scope}[yshift = 0cm, xshift = 20cm]
						\path (0,0) coordinate (v1);
						\path (-0.5,1) coordinate (v2);
						\path (-0.5,2) coordinate (v3);
						\path (0,3) coordinate (v4);
						\path (0.5,2) coordinate (v5);
						\path (0.5,1) coordinate (v6);
						\path (0,-1) coordinate (u1);
						\path (-1.5,0.7) coordinate (u2);
						\path (-1.5,2.3) coordinate (u3);
						\path (0,4) coordinate (u4);
						\path (1.5,2.3) coordinate (u5);
						\path (1.5,0.7) coordinate (u6);
						\draw (v1) -- (v2) -- (v3) -- (v4) -- (v5) -- (v6) -- (v1); 
						\draw (u1) -- (u2) -- (u3) -- (u4) -- (u5) -- (u6) -- (u1); 
						\foreach \i in {1,...,6}
						{
							\draw (v\i) -- (u\i);
						}
						\foreach \i in {1,...,6}
						{
							\draw (v\i)  [fill=white] circle (\vr);
							\draw (u\i)  [fill=white] circle (\vr);
						}
						\draw (v6)  [fill=black] circle (\vr);
						\draw (v3)  [fill=black] circle (\vr);
						\draw (u2)  [fill=black] circle (\vr);
						\draw (u5)  [fill=black] circle (\vr);
					\end{scope}
					
				\end{tikzpicture}
			\end{center}
			\caption{Moving robots in $C_6\cp K_2$}
			\label{fig:C_6}
		\end{figure}
		
		In both cases above, we note that these four moves are legal. Since the obtained sets are symmetric with respect to the original ones, by repeating these procedures the robots will eventually visit all the vertices of $C_n\cp K_2$. It follows that each $S$ is a mobile general position set, and hence $\mob(C_n\cp K_2) \ge 4$.   
	\end{proof}
	
	We now introduce a technical lemma that allows us to extend results on short cylinders to longer cylinders.
	
	\begin{lemma}\label{lem:wanderingcylinder}
		If $H$ is a connected graph with girth at least $2r$ and radius at least $r-1$, then for any graph $G$ it holds that \[ \mob (G \cp H) \geq \mob (G \cp P_r).\]
	\end{lemma}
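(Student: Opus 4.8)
The plan is to embed a $\mob$-set traversal of $G\cp P_r$ inside $G\cp H$ along a convex copy of $P_r$, and then to let that copy \emph{wander} across $H$ so that every vertex is eventually swept.

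First I would exploit the girth hypothesis to set up a transfer principle. Since the girth of $H$ is at least $2r$, any two vertices of $H$ at distance at most $r-1$ are joined by a unique geodesic (a second geodesic, or a shorter detour, would close up a cycle of length at most $2r-2 < 2r$), and such a geodesic is therefore convex in $H$. The radius hypothesis guarantees that convex geodesics of length exactly $r-1$ exist, and in fact that every vertex of $H$ is an endpoint of one, since every vertex has eccentricity at least $r-1$. Fix such a path $P=h_1\cdots h_r$. Because $d_{G\cp H}((g,h),(g',h'))=d_G(g,g')+d_H(h,h')$ and $P$ is convex in $H$, the subproduct $G\cp P$ is a convex subgraph of $G\cp H$, and $\psi_P\colon G\cp P_r\to G\cp P$, $\psi_P(g,i)=(g,h_i)$, is an isomorphism. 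Convexity yields the transfer principle I need: for $X\subseteq V(G\cp P)$, $X$ is in general position in $G\cp H$ if and only if $\psi_P^{-1}(X)$ is in general position in $G\cp P_r$, and, so long as all robots stay inside $G\cp P$, a move is legal in $G\cp H$ exactly when the corresponding abstract move is legal in $G\cp P_r$. Hence $t:=\mob(G\cp P_r)$ robots placed at $\psi_P(S)$ for a $\mob$-set $S$ of $G\cp P_r$ can replicate, inside the single window $G\cp P$, any legal-move sequence of $G\cp P_r$, thereby visiting every vertex of $G\cp P$.

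Next I would cover $H$ by overlapping windows. Using the above, every vertex of $H$ lies on some convex $(r-1)$-geodesic; combining this with the connectivity of $H$ and the girth bound (which guarantees that appending an edge at a non-leaf end of a convex $(r-1)$-path again produces a convex $(r-1)$-path, the new endpoint being genuinely new by the girth condition), one produces a sequence of windows $P^{(0)},P^{(1)},\dots$ whose union is $V(H)$ and in which consecutive windows share a common subpath on $r-1$ vertices, obtained by dropping one end vertex and appending an adjacent one.

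The heart of the argument, and the step I expect to be the main obstacle, is advancing the robots from window $P^{(k)}$ to the overlapping window $P^{(k+1)}$ by legal moves. The idea is that the two windows share a subpath on $r-1$ vertices, so if the robots can be maneuvered---by legal moves inside $G\cp P^{(k)}$---into a configuration that vacates the dropped end layer, then the occupied set lies entirely in the shared subpath and is therefore \emph{simultaneously} a configuration of $G\cp P^{(k+1)}$; re-anchoring to the new window then costs no moves, and one resumes covering there, now reaching the freshly appended layer. Iterating sweeps all windows, hence all of $G\cp H$, while the transfer principle keeps every configuration in general position throughout. The delicate point is to organize the abstract dynamics of $G\cp P_r$ so that the within-window traversal both visits everything and can be driven to vacate a chosen end layer and then be resumed after re-anchoring; the main levers here are the reversibility of legal moves (so that the legal-move relation on equal-size general position sets is symmetric) and the freedom to design the abstract traversal of $G\cp P_r$ that we transport into each window.
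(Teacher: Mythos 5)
Your setup (convexity of $G\cp Q$ for a geodesic $Q$ on $r$ vertices via the girth bound, the transfer principle, and the use of connectivity plus radius to cover all of $H$ by overlapping windows) matches the paper's proof. But the step you yourself identify as ``the heart of the argument'' --- advancing from one window to the next --- is exactly where your proposal has a genuine gap, and the mechanism you propose for it is the wrong one. You want to drive the robots, by legal moves inside $G\cp P^{(k)}$, into a configuration that vacates the layer $G^{v_1}$ about to be dropped, so that the occupied set already sits inside the shared $(r-1)$-layer subpath. Nothing in the definition of a mobile general position set guarantees this is possible: a maximum traversal of $G\cp P_r$ may well keep a robot in the end layer in every reachable configuration, and even if you could vacate it, you would still need the resulting configuration, reinterpreted inside $G\cp P^{(k+1)}\cong G\cp P_r$, to be one from which a full traversal can be resumed --- which is a further unproved claim, since an arbitrary general position set need not be mobile. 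You flag these points but do not resolve them, so the proof is incomplete as written.

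The paper's transition avoids all of this: after the robots have swept $V(G)\times V(Q)$ with $Q=v_1\cdots v_r$, one picks $v_{r+1}\in N_H(v_r)\setminus\{v_{r-1}\}$ and moves \emph{every} robot one layer forward, starting from the far end --- first each robot of $V(G)\times\{v_r\}$ to $V(G)\times\{v_{r+1}\}$, then those of $V(G)\times\{v_j\}$ to $V(G)\times\{v_{j+1}\}$ for $j=r-1,\dots,1$. The girth hypothesis makes $V(G)\times\{v_1,\dots,v_{r+1}\}$ convex and keeps the intermediate configurations in general position, and the end result is the image of the old configuration under the isomorphism of the old window onto the new one; by reversibility of legal moves this image is again a configuration from which the whole new window can be traversed. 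If you replace your ``vacate the end layer'' step with this uniform shift, your argument closes up and coincides with the paper's.
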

	
	\begin{proof}
		Let $Q$ be any path $v_1,v_2,\dots ,v_r$ of length $r-1$ in $H$. The subgraph of $G \cp H$ induced by $V(G) \times V(Q) $ is isomorphic to $G \cp P_r$, and as the girth of $H$ is at least $2r$, the subgraph is convex. Thus, $\mob (G \cp P_r)$ robots can traverse the vertices of $V(G) \times V(Q) $ in general position without leaving the subgraph. Now, let $v_{r+1} \in N_H(v_r) \setminus \{ v_{r-1}\} $. Suppose that the robots have visited  all the vertices of $V(G) \times V(Q)$ by a sequence of legal moves. Next, move all robots in $V(G) \times \{ v_r\} $ to $V(G) \times \{ v_{r+1}\} $ in turn by legal moves of the form $(u,v_r) \move (u,v_{r+1})$, where $u \in V(G)$. Then repeat this process to move the robots in $V(G) \times \{ v_j\} $ to $V(G) \times \{ v_{j+1}\} $ for $j = r-1,r-2,\dots ,1$. As $H$ has girth at least $2r$, the robots remain in general position throughout this process. As $H$ is connected and the radius of $H$ is at least $r-1$, any layer $G^h$ can be visited in this way. 
	\end{proof}
	
	Notice that Lemma~\ref{lem:wanderingcylinder} generalises the inequality $\mob (G) \leq \mob (G \cp K_2)$. We first focus on cylinders with cycles of length four.
	
	\begin{proposition}
		If $s \geq 3$ is an integer, then $\mob (C_4 \cp P_s) = 3$.
	\end{proposition}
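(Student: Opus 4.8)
The plan is to prove the two inequalities separately, using the value $\gp(C_4 \cp P_s) = 4$ recorded above (this is the ``otherwise'' branch of the displayed formula for $\gp(C_r \cp P_s)$, since $r=4$).

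For the lower bound I would first settle the base case $s=3$ by an explicit traversal, in the style of the move sequences used in Theorems~\ref{thm:grids} and~\ref{thm:prisms-cycles}. Writing $V(C_4) = \{v_0,v_1,v_2,v_3\}$ cyclically and $V(P_3) = [3]$, one checks that $\{(v_0,1),(v_2,1),(v_1,3)\}$ is a general position set (the two opposite vertices $(v_0,1),(v_2,1)$ together with a third vertex in the far layer), and I would exhibit a sequence of legal moves for these three robots visiting all twelve vertices of $C_4 \cp P_3$. Having established $\mob(C_4 \cp P_3) \ge 3$, I would invoke Lemma~\ref{lem:wanderingcylinder} with $G = C_4$ and $r = 3$: the path $P_s$ is a tree, so its girth trivially exceeds $6$, and its radius $\lfloor s/2 \rfloor$ is at least $2 = r-1$ precisely when $s \ge 4$. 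The lemma then yields $\mob(C_4 \cp P_s) \ge \mob(C_4 \cp P_3) \ge 3$ for every $s \ge 4$, which together with the base case covers all $s \ge 3$.

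For the upper bound I would argue by contradiction, assuming that four robots can traverse $C_4 \cp P_s$ in general position. The starting point is that every $C_4$-layer and every $P_s$-layer is convex, so from $\gp(C_4) = \gp(P_s) = 2$ each layer carries at most two robots at any instant. I would then classify the admissible $4$-element general position sets according to how the robots distribute among the four $P_s$-layers (the cycle coordinates). The key local fact is that if two robots share a $P_s$-layer at path-coordinates $i<j$, then every other robot must sit at a path-coordinate strictly between $i$ and $j$; this immediately rules out two fully occupied columns, and forces every admissible configuration to be either (a) one robot in each column, with the two robots in the even columns $\{v_0,v_2\}$ and the two in the odd columns $\{v_1,v_3\}$ occupying \emph{disjoint} path-intervals, or (b) a single doubled column together with its two cycle-neighbours, all lying strictly inside the span of the doubled column.

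The \textbf{main obstacle}, and the crux of the whole argument, is converting this static classification into a genuine obstruction to visiting every vertex. The difficulty is that a robot can change its column by a move along the cycle, so the even/odd separation in case~(a) is not literally an invariant, and one cannot simply quote a ``corner'' argument as in the grid case, since every single vertex does lie in \emph{some} $4$-element general position set. I therefore expect to need a careful dynamic analysis centred on the two end layers $C_4^1$ and $C_4^s$: concretely, I would aim to show that covering all four vertices of both end layers would require the even-column and odd-column robots to exchange sides along the path, and that any such exchange is blocked because it would momentarily force three robots into a common $C_4$-layer. I would make this precise either by producing a quantity (such as the relative order along $P_s$ of the two bipartition classes of $C_4$) that no legal move can reverse, or by a short case analysis over the configurations~(a) and~(b), in each case exhibiting a vertex of an end layer that no robot can reach while general position is maintained. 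This yields $\mob(C_4 \cp P_s) \le 3$, and hence equality.
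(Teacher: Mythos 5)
Your lower bound follows the paper's route exactly: an explicit traversal of $C_4\cp P_3$ followed by an application of Lemma~\ref{lem:wanderingcylinder} (whose hypotheses you check correctly), and your static classification of the $4$-robot general position sets into cases (a) and (b) matches the configurations the paper identifies. The one small incompleteness on this side is that you never exhibit the move sequence from $\{(v_0,1),(v_2,1),(v_1,3)\}$; the paper gives an explicit cyclic sequence of three moves (starting from $\{(v_0,1),(v_1,2),(v_0,3)\}$ and rotating), and some such explicit verification is needed, since ``being a general position set'' does not by itself guarantee mobility.

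The genuine gap is in the upper bound, and you have in fact diagnosed it yourself: you call the passage from the static classification to a dynamic obstruction ``the crux of the whole argument'' and then only describe two strategies you \emph{would} pursue, without carrying either out. Two concrete steps are missing. First, in case (b) (two robots in column $v_0$ at rows $i<j$, the other two at $(v_1,k)$ and $(v_3,l)$ with $i<k,l<j$) one must check that no robot has any legal move out of its own $P_s$-layer: for instance, the move $(v_1,k)\move(v_2,k)$ is illegal because $(v_3,l)$ then lies on a shortest path from $(v_2,k)$ to $(v_0,i)$ (if $l\le k$) or to $(v_0,j)$ (if $l>k$), and $(v_1,k)\move(v_0,k)$ puts three robots collinearly in one column. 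Hence column $v_2$ is permanently unreachable, which kills case (b); and since any column change by a robot instantly creates a shared column of exactly this type, the same computation is what promotes the column assignment in case (a) to a true invariant -- this resolves the worry you raise that the even/odd separation ``is not literally an invariant''. Second, case (a) still needs a finishing argument: every pair of adjacent vertices of $C_4\cp P_s$ is a maximal general position set, so the four robots always form an independent set; as each robot is now confined to its column and its path-coordinate changes by one per move, the sign of the difference between the coordinates of robots in adjacent columns can never flip, so the robots in columns $v_1$ and $v_3$ always sit strictly above the initially lowest robot (which never leaves $[1,s]$), and the vertices $(v_1,1),(v_3,1)$ are never visited. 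Your proposed monovariant (the relative order along $P_s$ of the two bipartition classes of $C_4$) is precisely the right idea, but as written your upper bound is a plan for a proof rather than a proof.
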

	
	\begin{proof}
		Let $s \geq 3$, $V(C_4) = \mathbb{Z}_4$, and $V(P_s) = [s]$. We first show that $\mob(C_4 \cp P_s) \leq 3$. Suppose for a contradiction that there exists a mobile general position set $S$ of $C_4 \cp P_s$ with $|S| \geq 4$. 
		
		Clearly, no three robots from $S$ can lie in the same $P_s$-layer. Suppose that two robots $R_1$ and $R_2$ in $S$ lie in the same $P_s$-layer of $C_4 \cp P_s$; without loss of generality, $R_1$ and $R_2$ are stationed at vertices $(0,i)$ and $(0,j)$ respectively, where $1 \leq i < j \leq s$. There is a shortest path from any vertex $(u,v)$ with $v \in [i]$ to $R_2$ through $R_1$, and likewise for vertices with second coordinate at 
		least $j$. Hence, there are no robots on $\mathbb{Z}_4 \times \left ( [1,i] \cup [j,s]\right )$ apart from $R_1$ and $R_2$. Call the other two robots $R_3$ and $R_4$. By the above analysis, $R_3$ and $R_4$ cannot have the same first coordinate. Hence we can assume that robot $R_3$ lies at $(1,k)$, where $i < k < j$. Any vertex in $\left (\{ 1,2\} \times [i+1,j-1] \right ) \setminus \{ (1,k)\} $ has a shortest path to either $R_1$ or $R_2$ through $R_3$, so $R_4$ must be at a vertex $(3,l)$, where $i < l < j$. However, in this configuration each robot is only free to move within its $P_s$-layer, and so no robot can visit any vertex in $^2P_s$, a contradiction. Consequently, there is one robot on each $P_s$-layer, and none of the robots can move to another $P_s$-layer.
		
		Observe that any pair of adjacent vertices constitutes a maximal general position set of $C_4 \cp P_s$, so the robots must at all times occupy an independent set. Therefore, if we suppose that $R_1$ is the robot located at the vertex with smallest second coordinate in the initial configuration, say at vertex $(0,i)$, then no robot in the layers $^1P_s$ or $^3P_s$ can ever move to a position with second coordinate smaller than that of $R_1$, and hence the vertices in $\{ 1,3\} \times \{ 1\} $ cannot be visited by legal moves. Thus, $\mob (C_4 \cp P_s) \leq 3$.
		
		To show the lower bound, consider $C_4 \cp P_3$. We start with robots at vertices $(0,1)$, $(1,2)$ and $(0,3)$. Then for $i = 0,1,2$ in succession we perform the sequence of three legal moves $(i+1,2) \move (i+2,2)$, $(i,3) \move (i+1,3)$ and $(i,1) \move (i+1,1)$ in this order. Lemma~\ref{lem:wanderingcylinder} now gives the result for cylinders $C_4 \cp P_s$ for $s \geq 4$.    
	\end{proof}
	
	Theorem~\ref{thm:prisms-cycles} for prisms along with Lemma~\ref{lem:wanderingcylinder} implies that $\mob (C_r \cp P_s) \geq 4$ for $r \geq 5, s \geq 2$. Combined with the upper bound involving $\gp (C_r \cp P_s)$ we see that $\mob (C_r \cp P_s) = 4$ for $r \in \{ 5,6,8\} $ and $s \geq 2$, or for $r \geq 5$ and $s \leq 4$. In the cases $r = 7$ or $r \geq 9$ and $s \geq 5$ the mobile general position number of $C_r \cp P_s$ must be either four or five.
	
	\begin{proposition}\label{prop:cylinders}
		If $r = 9$ or $r \geq 11$, and $s \geq 5$, then $\mob(C_r \cp P_s) = 5$. 
	\end{proposition}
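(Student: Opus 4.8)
The upper bound requires no new work. Since $r = 9$ or $r \ge 11$ and $s \ge 5$, the formula $\gp(C_r \cp P_s) = 5$ from~\cite{klavzar-2021} applies, and combined with the trivial inequality $\mob(C_r \cp P_s) \le \gp(C_r \cp P_s)$ it yields $\mob(C_r \cp P_s) \le 5$. The whole proposition therefore reduces to producing a mobile general position set of cardinality five, that is, to placing five robots in general position and listing a sequence of legal moves after which every vertex of $C_r \cp P_s$ has been visited.

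The plan is to realise such a set by a \emph{rotating band} construction. Write $V(C_r) = \{v_i : i \in \mathbb{Z}_r\}$ and $V(P_s) = [s]$, and let $\delta(i,j) = \min(|i-j|,\, r - |i-j|)$ denote cyclic distance, so that a point $(v_{i_2},k_2)$ lies on a geodesic between $(v_{i_1},k_1)$ and $(v_{i_3},k_3)$ exactly when $k_2$ lies between $k_1$ and $k_3$ \emph{and} $\delta(i_1,i_2)+\delta(i_2,i_3)=\delta(i_1,i_3)$. I would fix an explicit five-element general position set $S$ whose points are distributed around the cycle at two or three consecutive $P_s$-layers, the precise shape being dictated by the gp-sets that underlie the equality $\gp(C_r\cp P_s)=5$ in~\cite{klavzar-2021}. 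The core of the argument is a short, explicitly listed sequence of legal moves that advances the entire configuration by one position around the cycle, carrying $S$ to its image under $v_i \mapsto v_{i+1}$; iterating this sequence $r$ times then sweeps every vertex in the layers occupied by the band and returns the robots to a rotate of $S$.

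Having swept one band, I would cover the remaining layers by translating the band along the path. For large $s$ this is automatic: once the base case $\mob(C_r \cp P_5) = 5$ is established, Lemma~\ref{lem:wanderingcylinder} (applied with $G = C_r$ and $H = P_s$, whose girth is infinite and whose radius $\lceil (s-1)/2\rceil$ is at least $4$ once $s \ge 8$) gives $\mob(C_r \cp P_s) \ge \mob(C_r \cp P_5) = 5$. The residual values $s \in \{5,6,7\}$ I would handle by exhibiting the path-sweeping moves directly, letting a robot make an excursion towards the far layers, in the spirit of Proposition~\ref{prop:bounds-for-cp}, as the band rotates past the appropriate cycle position and then return.

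I expect the main obstacle to be the general position bookkeeping during the rotation step, not the path traversal. Because $\gp(C_r \cp P_s) = 5$, the set $S$ is a \emph{maximum} general position set, so there is essentially no slack: each intermediate configuration obtained after a single move must again avoid placing three robots on a common geodesic, i.e.\ must avoid the additive relation $\delta(i_1,i_2)+\delta(i_2,i_3)=\delta(i_1,i_3)$ among any triple sharing comparable path-coordinates. Choosing one configuration $S$ and one move-ordering that survives this betweenness check uniformly in $r$, while confirming that the check genuinely fails at the excluded values $r \in \{7,10\}$, is the delicate part; the cleanest route is probably to space the consecutive robots around the cycle as evenly as the parity of $r$ permits and to order the moves so that at every stage the moving robot never lands on the shorter cycle-arc determined by the two robots between which it could otherwise fall.
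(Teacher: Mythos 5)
Your upper bound and your overall strategy for the lower bound (an explicit five-robot configuration on $C_r \cp P_5$ that is advanced one step around the cycle by a short sequence of legal moves, followed by Lemma~\ref{lem:wanderingcylinder} to pass to larger $s$) are exactly the paper's approach. However, there is a genuine gap: essentially all of the mathematical content of the lower bound consists of the explicit configuration, the explicit ordering of the five moves, and the verification that every intermediate configuration remains in general position, and you never supply any of these. You correctly identify this as ``the delicate part'' but leave it undone, so what you have is a plan rather than a proof. Moreover, the one concrete structural choice you do commit to cannot work: you propose a band of five robots ``distributed around the cycle at two or three consecutive $P_s$-layers.'' The subgraph of $C_r\cp P_s$ induced by $V(C_r)$ times any interval of at most four consecutive path-vertices is convex and isomorphic to $C_r\cp P_k$ with $k\le 4$, whose general position number is $4$ by the very formula you cite; hence five robots in general position must occupy at least five distinct $P_s$-layers. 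The paper's configuration for $r\ge 11$ accordingly places one robot in each of the five layers, namely $S_{i,0}=\{(i+1,1),(i+4,2),(i+\lfloor r/2\rfloor+2,3),(i,4),(i+3,5)\}$, rotated by an explicit five-move sequence, and a \emph{different} configuration and move order are needed for $r=9$ --- something your uniform-in-$r$ plan does not anticipate.

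Two smaller points. First, you do not need to ``confirm that the check genuinely fails at $r\in\{7,10\}$'': the proposition makes no claim for those values, so nothing about them needs proving (the paper merely conjectures the answer is $4$ there). Second, your caution about the radius hypothesis of Lemma~\ref{lem:wanderingcylinder} for $s\in\{6,7\}$ is fair as a literal reading of its statement, but the proof of that lemma only uses that $H$ is connected and contains a convex copy of $P_5$, which $P_s$ provides for every $s\ge 5$; the paper invokes the lemma for all $s\ge 6$ and no separate treatment of $s\in\{6,7\}$ is required.
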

	
	\begin{proof}
		It is known from~\cite[Theorem 3.2]{klavzar-2021} that $\gp (C_r \cp P_s) = 5$ for $r = 9$ or $r \geq 11$, and $s \geq 5$. It only remains to show that $\mob(C_r \cp P_s) \geq 5$. We show that the result is true for $C_r \cp P_5$, and the full claim then follows for larger values of $s$ by Lemma~\ref{lem:wanderingcylinder}. Set $V(C_r) = \mathbb{Z}_r$ and $V(P_5) = [5]$. 
		
		If $r \geq 11$ and $i \in \mathbb{Z}_r$, then we consider the set
		\[ S_{i,0}  = \{(i+1,1),(i+4,2),(i+\lfloor{r/2}\rfloor+2,3),(i,4),(i+3,5)\} .\]
		For $i = 0,1,\dots ,r-2$ we define the following sequence of five moves:
		\begin{itemize}
			\item $(i+4,2) \move (i+5,2)$ to give $S_{i,1}$,
			\item $(i+3,5) \move (i+4,5)$ to give $S_{i,2}$,
			\item $(i+\lfloor{r/2}\rfloor+2,3) \move (i+\lfloor{r/2}\rfloor+3,3)$ to give $S_{i,3}$,
			\item $(i+1,1) \move (i+2,1)$ to give $S_{i,4}$,
			\item $(i,4) \move (i+1,4)$.
		\end{itemize}
		The final move brings us to the configuration $S_{i+1,0}$. We start with the five robots positioned at the set $S_{0,0}$ and perform these sequences of moves for $i = 0,1,\dots r-2$. Each of these moves is legal. To see this, notice that the robots remain in general position at each stage, which is easily verified for $S_{0,j}$, $j \in [5]$, and by then observing that the automorphism that maps $(u,v)$ to $(u+i,v)$ for all $u \in \mathbb{Z}_r,s \in [5]$ transforms $S_{0,j}$ to $S_{i,j}$ for $j \in [5]$. Moreover, by the end of the process, all vertices have been visited.
		
		Similarly, for $r = 9$ we start with robots positioned at the set 
		\[ \{(1,1),(4,2),(\lfloor{s/2}\rfloor+2,3),(0,4),(3,5)\}\]
		and perform the sequence of moves:
		\begin{itemize}
			\item $(\lfloor{s/2}\rfloor+2,3) \move (\lfloor{s/2}\rfloor+3,3)$,
			\item $(1,1) \move (2,1)$,
			\item $(0,4) \move (1,4)$,
			\item $(4,2) \move (5,2)$,
			\item $(3,5) \move (4,5)$.
		\end{itemize}
		By repeating these moves the robots visit all of the vertices of $C_9 \cp P_5$ by legal moves.
	\end{proof}
	
	Hence, the only unknown values are $\mob (C_7 \cp P_s)$ and $\mob (C_{10} \cp P_s)$ for $s \geq 5$. We conjecture that the answer is four in these cases.
	
	By combining Lemma~\ref{lem:wanderingcylinder} with Proposition~\ref{prop:cylinders} we obtain a lower bound for the mobile general position number of sufficiently large torus graphs.
	\begin{corollary}
		\label{cor:Cr cp Cs}
		For $r = 9$ or $r > 10$, and $s \geq 10$, $\mob (C_r \cp C_s) \geq 5$.
	\end{corollary}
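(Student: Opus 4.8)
The plan is to obtain this lower bound as an immediate consequence of Proposition~\ref{prop:cylinders} together with Lemma~\ref{lem:wanderingcylinder}, the latter having been tailored precisely to transfer a mobile general position construction from a short cylinder $G \cp P_\rho$ to a product $G \cp H$ whenever $H$ locally resembles a long path. I would take $G = C_r$ and $H = C_s$ and invoke Lemma~\ref{lem:wanderingcylinder} with path parameter $\rho = 5$, which will yield $\mob(C_r \cp C_s) \geq \mob(C_r \cp P_5)$ as soon as the hypotheses on $H = C_s$ are verified.

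The only point needing checking is therefore that $C_s$ meets the girth and radius conditions of the lemma for the choice $\rho = 5$, namely girth at least $2\rho = 10$ and radius at least $\rho - 1 = 4$. This is the (trivial) bookkeeping step: the girth of $C_s$ equals $s$, and its radius equals $\lfloor s/2 \rfloor$, so the hypothesis $s \geq 10$ gives girth $s \geq 10$ and radius $\lfloor s/2\rfloor \geq 5 \geq 4$, exactly as required. Hence Lemma~\ref{lem:wanderingcylinder} applies and delivers $\mob(C_r \cp C_s) \geq \mob(C_r \cp P_5)$.

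Finally I would close the argument by appealing to Proposition~\ref{prop:cylinders}, which gives $\mob(C_r \cp P_5) = 5$ precisely when $r = 9$ or $r \geq 11$; since the stated range $r = 9$ or $r > 10$ coincides with this, we conclude $\mob(C_r \cp C_s) \geq 5$. There is no genuine obstacle beyond matching the parameter ranges: all the real work has already been carried out in the proof of Proposition~\ref{prop:cylinders}, and the corollary merely records that the five-robot routing built there inside a $P_5$-factor survives when that factor is replaced by a sufficiently long and large cycle, because a length-four subpath of $C_s$ spans a convex copy of $P_5$ and there is always room to slide the whole configuration around the cycle without introducing shortcuts.
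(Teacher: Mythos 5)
Your proposal is correct and is precisely the paper's own argument: apply Lemma~\ref{lem:wanderingcylinder} with $G = C_r$, $H = C_s$ and path parameter $5$ (girth $s \geq 10$ and radius $\lfloor s/2\rfloor \geq 4$ verify the hypotheses), then conclude via $\mob(C_r \cp P_5) = 5$ from Proposition~\ref{prop:cylinders}. The parameter bookkeeping is exactly as you describe, and nothing further is needed.
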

	It is shown in~\cite{KorzeVesel} that if $r,s \geq 7$ and $r$ and $s$ do not both lie in $\{ 8,10,12\}$, then $\gp (C_r \cp C_s) = 7$. Computer search shows that the torus $C_9 \cp C_8$ has mobile general position number seven, so this upper bound can be achieved. 
	
	\section{Corona products and joins}
	\label{sec:corona+joins}
	
	In this section we consider moving robots in general position through corona products and joins. We first define these two graph operations. 
	
	Given two graphs $G$ and $H$ with $V(G) = \{v_1, \ldots ,v_{n}\}$, the {\em corona product} graph $G\odot H$ is formed by taking one copy of $G$ and $n$ disjoint copies of $H$, call them $H^1,\ldots, H^{n}$, and for each $i \in [n]$ adding all the possible edges between $v_i\in V(G)$ and every vertex of $H^i$. For $i \in [n]$ we will write $\widetilde{H}_i$ for the subgraph of $G \odot H$ induced by $V(H^i) \cup \{ v_i\}$. Also, the {\em join} $G\vee H$ of graphs $G$ and $H$ is obtained from the disjoint union of $G$ and $H$ by adding all possible edges between $G$ and $H$.
	
	\subsection{Corona product graphs}
	
	The first paper~\cite{klavzar-2023} on the mobile general position problem briefly considered mobile general position sets in rooted products. This suggests investigating the problem in corona products, which can also be viewed as a kind of rooted product. The general position number of corona product graphs was studied in~\cite{ghorbani-2021}. We now bound the value of the mobile general position number of the corona product $G\odot H$. 
	
	\begin{theorem}
		\label{thm:corona-bounds}
		For any two graphs $G$ and $H$,
		$$\max \{ \mob(G), \mob(H \vee K_1) \} \le \mob(G\odot H) \le \max \{n(G), \gp(H \vee K_1)\}\,.$$
	\end{theorem}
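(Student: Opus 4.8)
The plan is to prove the two inequalities separately, and in each case to reduce to reasoning inside convex subgraphs of $G\odot H$. Three convexity facts do most of the work, and I would establish them first: the copy of $G$ sitting inside $G\odot H$ is convex (any detour through some $H^i$ must enter and leave through $v_i$, so it cannot shorten a path between two apex vertices); each $\widetilde{H}_i$ is convex and isomorphic to $H\vee K_1$ (two vertices of $H^i$ are at distance $1$, or through $v_i$ at distance $2$, and no shorter route leaves $\widetilde{H}_i$); and the union $\widetilde{H}_i\cup\widetilde{H}_j$ of two clusters whose apexes are adjacent in $G$ is convex. Since a general position set of a convex subgraph is automatically a general position set of the whole graph, robots confined to any of these subgraphs may move exactly as they could inside the subgraph alone.

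For the lower bound $\mob(G\odot H)\ge\mob(G)$ I would run a $\mob$-process of $G$ inside the $G$-copy; this is legal by convexity, and it visits every apex. Each time a robot occupies a vertex $v_i$, I would interrupt the process and let that single robot tour $H^i$ by repeatedly stepping $v_i\move h\move v_i$ for each $h\in V(H^i)$. Such excursions are legal: a shortest path from the touring robot to any other robot passes through $v_i$, so the excursion robot is never an interior vertex of a geodesic joining two others, and as an endpoint it inherits the positionability that held when the robot sat at $v_i$. This visits all of $V(G\odot H)$ with $\mob(G)$ robots. For the lower bound $\mob(G\odot H)\ge\mob(H\vee K_1)$ I would place $\mob(H\vee K_1)$ robots in $\widetilde{H}_1\cong H\vee K_1$ and first run a $\mob$-process there to cover $\widetilde{H}_1$. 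To cover the remaining clusters (processing them along a spanning tree of $G$), I would park all but one robot in a conflict-free formation inside $\widetilde{H}_1$ and send the last robot as a lone explorer across the bridging edge $v_1v_j$ to tour $\widetilde{H}_j$ and return, using convexity of $\widetilde{H}_1\cup\widetilde{H}_j$. The delicate point, which I would treat carefully, is that while the explorer passes through an apex $v_1$ or $v_j$ the parked robots must not have a geodesic through that apex; this forces the parked robots into an apex-avoiding (essentially mutually adjacent) configuration, and verifying that such a parking configuration of the correct size always exists is the subtle step of the construction.

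For the upper bound, convexity of each $\widetilde{H}_i$ immediately gives that at any moment a single cluster contains at most $\gp(H\vee K_1)$ robots. I would then split into two regimes. If some reachable configuration has all robots inside one cluster, then $|S|\le\gp(H\vee K_1)$ and we are done. Otherwise the robots are permanently dispersed over at least two clusters, and the goal is to show $|S|\le n(G)$. Here lies the main obstacle: a static analysis is hopeless, since $G\odot H$ admits general position sets of size up to $n(G)\cdot\gp(H)$ (pack every $H^i$ with a $\gp(H)$-set and leave all apexes empty), so the reduction to $n(G)$ must be driven entirely by mobility. I would argue that if $|S|>n(G)$ then, by the pigeonhole principle applied to the partition $\{V(\widetilde{H}_i)\}$, at every moment some cluster carries at least two robots; when two distinct clusters are interior-occupied their apexes are forced to be empty, and a cluster holding two non-adjacent interior robots can neither release a robot to its apex nor admit one, since either move creates a three-in-a-line through the apex.

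The step I expect to require the most care is turning this local ``freezing'' into a global obstruction: showing that under the dispersion hypothesis some apex vertex $v_i$ (or some interior vertex) can never be occupied, which contradicts the covering requirement. The awkward sub-case is a cluster whose resident robots happen to be mutually adjacent in $H$, since then the geodesics between them are single edges and the apex is not immediately blocked; handling this case — and thereby confirming that genuine mobility of more than $n(G)$ robots is impossible once they are spread across clusters — is where I anticipate the real work of the proof to sit.
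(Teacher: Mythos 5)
Your lower bound argument is essentially the paper's: run a $\mob$-process of $G$ on the apex copy of $G$ with excursions into each $H^i$, and run a $\mob$-process of $H\vee K_1$ inside $\widetilde H_1$ with a lone explorer covering the rest. The ``subtle parking step'' you flag is actually automatic and needs no extra work: at the moment the $\mob$-process of $H\vee K_1$ places a robot on the apex $v_1$, the current configuration is a general position set of $H\vee K_1$ containing the apex, and since the apex is adjacent to everything (so all distances in $H\vee K_1$ are at most $2$), the remaining robots must already induce a clique in $H^1$; their pairwise geodesics are single edges, so the explorer can leave through $v_1$ and wander freely.

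The upper bound, however, has a genuine gap. You split into ``at some moment all robots lie in one cluster'' versus ``the robots are permanently dispersed,'' and in the dispersed case you aim to prove $|S|\le n(G)$ by a freezing argument that you yourself describe as the unfinished ``real work,'' with an unresolved sub-case (clusters whose resident robots are mutually adjacent). The missing idea is that the dispersed branch is vacuous once $|S|\ge n(G)+1$, so no freezing argument is needed. By pigeonhole some $\widetilde H_i$ initially contains two robots. Robots can enter or leave $H^i$ only through the cut vertex $v_i$, and $v_i$ must eventually be visited; chasing these two facts (either a second robot enters $\widetilde H_i$ via $v_i$ while one is already in $H^i$, or two robots sit in $H^i$ and the first one to reach $v_i$ does so while the other is still inside) produces a moment at which one robot occupies $v_i$ and another occupies a vertex of $H^i$. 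At that instant every geodesic from the robot in $H^i$ to any robot outside $\widetilde H_i$ passes through the occupied vertex $v_i$, so there are no robots outside $\widetilde H_i$ at all: the whole set is concentrated in $\widetilde H_i\cong H\vee K_1$, whence $|S|\le\gp(H\vee K_1)$. Thus $|S|\le n(G)$ or $|S|\le\gp(H\vee K_1)$, which is the claimed bound; your attempted route, by contrast, would also have to contend with the fact that static general position sets of size up to $n(G)\cdot\gp(H)$ exist, exactly the obstacle you identify but do not overcome.
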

	
	\begin{proof}
		Let $S$ be a mobile general position set of $H\vee K_1$ and let $S_1$ be its copy in $\widetilde{H}_1$. We claim that $S_1$ is a mobile general position set of $G\odot H$. As $\widetilde{H}_1$ is an isometric subgraph of $G \odot H$, first the robots from $S_1$ can visit each vertex of $\widetilde{H}_1$. Next, as soon as one robot visits the vertex $v_1$, this robot can visit all the vertices of $V(G\odot H)\setminus V(\widetilde{H}_1)$ before returning to $v_1$. It follows that $\mob(G\odot H) \ge \mob(H \vee K_1)$. 
		
		Now, let $S$ be a mobile general position set of $G$ and let $S^{\prime }$ be the copy of $S$ in $G\odot H$. Then each vertex $v_i\in V(G)$ can be visited by a robot from $S^{\prime }$. Moreover, as soon as a robot moves to some vertex $v_i$, this robot can visit all the vertices from $H^i$ and then return to $v_i$. Hence $S^{\prime }$ is a mobile general position set of $G\odot H$ and $\mob(G\odot H) \ge \mob(G)$. We conclude that $\mob(G\odot H) \ge \max\{ \mob(H \vee K_1), \mob(G)\}$. 
		
		To prove the upper bound, let $S$ be a mobile general position set of $G\odot H$. If $|S|\le n(G)$, then there is nothing to prove. Assume next that $|S| \ge n(G)+1$. Then by the pigeonhole principle we have $|S \cap V(\widetilde{H}_i)| \ge 2$ for some $i\in [n(G)]$. Hence either at some point there is already a robot in $H^i$ and a second robot enters $\widetilde{H}_i$ via $v_i$, or else there are always at least two robots in $V(H_i)$ and a further robot must visit $v_i$. Denote the positions of the robots at this moment by $S^{\prime }$. In $S^{\prime }$ there is a robot $R_1$ in $V(H^i)$ and a robot $R_2$ at the cut-vertex $v_i$. As any path from $R_1$ to a robot on $V(G \odot H) \setminus V(\widetilde{H}_i)$ would pass through $R_2$, it follows that $S^{\prime } \subseteq V(\widetilde{H}_i)$ and $S^{\prime }$ is a general position set of $\widetilde{H}_i$. Hence, under the assumption that $|S| \ge n(G)+1$, we must have $\mob(G\odot H) = |S^{\prime }| \le \gp(H \vee K_1)$. 
	\end{proof}
	
	When both $G$ and $H$ are complete graphs, $G\odot H$ is a block graph, hence the following consequence can also be deduced from~\cite[Theorem~2.3]{klavzar-2023}.  
	
	\begin{corollary}
		\label{cor:complete-complete}
		If $r,s \ge 1$, then
		$\mob(K_r\odot K_s) = \max\{r,s+1\}$.
	\end{corollary}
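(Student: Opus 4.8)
The plan is to read off the result directly from Theorem~\ref{thm:corona-bounds} by instantiating $G = K_r$ and $H = K_s$, and then checking that the lower and upper bounds it provides coincide. The key preliminary observation is that the join $K_s \vee K_1$ is nothing but $K_{s+1}$: attaching a single vertex adjacent to all $s$ vertices of $K_s$ yields a complete graph on $s+1$ vertices. So every quantity occurring in Theorem~\ref{thm:corona-bounds} will reduce to a parameter of a complete graph.

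Next I would record the relevant elementary values. For any complete graph $K_t$ one has $\mob(K_t) = \gp(K_t) = t$: every shortest path in $K_t$ has length at most one, so no vertex ever lies strictly between two others, whence \emph{every} subset of $V(K_t)$ is a general position set; placing one robot on each of the $t$ vertices then visits the whole graph with no moves required. In particular $\mob(K_r) = r$, $\mob(K_s \vee K_1) = \mob(K_{s+1}) = s+1$, and $\gp(K_s \vee K_1) = \gp(K_{s+1}) = s+1$. Finally $n(K_r) = r$.

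Substituting these into Theorem~\ref{thm:corona-bounds} gives a lower bound of $\max\{\mob(K_r), \mob(K_{s+1})\} = \max\{r, s+1\}$ and an upper bound of $\max\{n(K_r), \gp(K_{s+1})\} = \max\{r, s+1\}$. Since the two bounds agree, the sandwich yields $\mob(K_r \odot K_s) = \max\{r, s+1\}$, as claimed. It is worth a quick sanity check of the degenerate cases: when $r = 1$ we have $K_1 \odot K_s \cong K_{s+1}$, matching $\max\{1, s+1\} = s+1$, and when $s = 1$ the product $K_r \odot K_1$ is $K_r$ with a pendant vertex at each of its $r$ vertices, matching $\max\{r, 2\}$.

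There is no genuine obstacle here: the entire argument is the evaluation of four parameters of complete graphs together with the identity $K_s \vee K_1 = K_{s+1}$. The only point deserving even a sentence of justification is the equality $\mob(K_t) = t$, which follows from the fact that the general position property is vacuous in a complete graph. I would also remark, as the surrounding text does, that $K_r \odot K_s$ is a block graph, so the statement is consistent with the alternative derivation via~\cite[Theorem~2.3]{klavzar-2023}.
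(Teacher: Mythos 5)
Your proposal is correct and is essentially the paper's own derivation: the corollary is obtained by instantiating Theorem~\ref{thm:corona-bounds} with $G=K_r$, $H=K_s$, using $K_s\vee K_1=K_{s+1}$ and $\mob(K_t)=\gp(K_t)=t$ so that the lower and upper bounds both collapse to $\max\{r,s+1\}$. Your closing remark about the block-graph route via~\cite[Theorem~2.3]{klavzar-2023} matches the paper's own comment as well.
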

	
	Note that Corollary~\ref{cor:complete-complete} demonstrates the sharpness of all the bounds in Theorem~\ref{thm:corona-bounds}. For another sharpness example, in which the upper and lower bounds do not coincide, consider $G = K_2$ and $H = C_4$. Then we have that $\mob(K_2)=2$, $\mob(C_4\vee K_1)=2$, and $\gp(C_4\vee K_1)=3$. It can be noted that $\mob(K_2\odot C_4) = 3 = \gp (C_4 \vee K_1)$. For another infinite family, let $G$ be an arbitrary tree, and $H$ the edgeless graph of order at least two. Since in this case $G\odot H$ is a tree, we have $\mob(G\odot H) = 2$ (see~\cite[Theorem 2.3]{klavzar-2023}), which is also the value of the lower bound in Theorem~\ref{thm:corona-bounds}. 
	
	We complete this section by presenting a result which shows that none of the bounds of Theorem~\ref{thm:corona-bounds} is sharp in general. 
	
	\begin{theorem}
		\label{thm:corona-cycle-K_1}    
		If $n\ge 3$, then $\mob(C_n \odot K_1) = \left\lceil \frac{n}{2}\right\rceil + 1$. 
	\end{theorem}
	
	\begin{proof}
		Set $V(C_n) = \mathbb{Z}_n$ and, for each $i\in \mathbb{Z}_n$, let $i'$ be the leaf in $C_n\odot K_1$ attached to $i$. We first show that $\mob(C_n \odot K_1) \ge \left\lceil \frac{n}{2}\right\rceil + 1$. Set $Y_i = \{i^{\prime },(i+1)^{\prime },\dots,\left ( \left\lceil \frac{n}{2}\right\rceil +i\right )^{\prime }\}$ for any $i \in \mathbb{Z}_n$. We claim that $Y_0$ is a mobile general position set. It is easily seen that each $Y_i$ is a general position set of $C_n \odot K_1$. 
		
		Starting from some fixed $Y_i$, we move the robot $R$ placed at $\left ( \left\lceil \frac{n}{2}\right\rceil +i\right )^{\prime }$ through the vertices of the set $[ \left ( \left\lceil \frac{n}{2}\right\rceil +i\right ),(i-1)]$ and their attached leaves, and finally leave the robot at $(i-1)^{\prime }$. After these moves we are left with the robots occupying the set $Y_{i-1}$ and at each stage the robots remained in general position, since the paths between robots in $Y_i \setminus \{ \left (\left \lceil \frac{n}{2} \right \rceil +i \right )^{\prime }\}$ pass through $[i,\left \lceil \frac{n}{2}\right \rceil +i-1]$. Therefore starting at $Y_0$ and repeating this process for $i = 0,-1,\dots ,-\left \lceil \frac{n}{2} \right \rceil $ results in all vertices being visited by a robot and $Y_0$ is a mobile general position set as claimed.
		
		We now prove that $\mob(C_n \odot K_1) \le \left\lceil \frac{n}{2}\right\rceil + 1$. Note that each set $\{ i,i^{\prime }\} $, $i \in \mathbb{Z}_n$, is a maximal general position set, so we must have $|X\cap \{i,i'\}| \le 1$ for each $i\in \mathbb{Z}_n$ and any $\mob$-set $X$. Consider the moment that a robot visits the vertex $0$. Let $j,k$ be the smallest and largest values in $[1,n-1]$, respectively, such that there is a robot in $\{ j,j^{\prime }\}$ and $\{ k,k^{\prime }\}$. To avoid the robot in $\{ k,k^{\prime }\} $ having a shortest path to the robot in $\{ j,j^{\prime }\}$ through the robot at $0$ we must have $k-j \leq \left \lceil \frac{n}{2} \right \rceil -1$. As each set $\{ i,i^{\prime }\}$ contains at most one robot for $i \in [j,k] \cup \{ 0\}$ and no robots for $i \in [k+1,n-1] \cup [1,j-1]$, this gives an upper bound of $\left \lceil \frac{n}{2} \right \rceil +1$ robots in the graph.  
	\end{proof}
	
	Notice that for $C_n \odot K_1$, the lower bound of Theorem~\ref{thm:corona-bounds} is three if $n \geq 3$ and $n \notin \{ 4,6\} $, and it is two if $n \in \{ 4,6\}$, whereas the upper bound is $n$. Since $C_n \odot K_1$ is a unicyclic graph, we may recall that mobile general position sets of unicyclic graphs were discussed in~\cite{klavzar-2023}.
	
	\subsection{Joins of graphs}
	We now give bounds for the mobile general position number of joins $G \vee H$. Observe that if both $G$ and $H$ are cliques, then $G \vee H$ is also a clique and the question is trivial, so we will assume that at least one of $G$ and $H$ is not a clique.
	
	\begin{theorem}
		\label{thm:join}
		If $G$ and $H$ are (not necessarily connected) graphs with clique number at least two, and $G$ and $H$ are not both cliques, then \[ \min \{ \omega(G),\omega (H)\} +1 \leq \mob (G \vee H) \leq \omega (G)+\omega (H)-1.\] For any graph $G$ with order $n \geq 2$, \[  2 \leq \mob (G \vee K_1) \leq \omega (G)+1.\] 
	\end{theorem}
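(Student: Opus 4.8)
The plan rests on describing shortest paths in a join. Since $\diam(G\vee H)\le 2$, three vertices lie on a common shortest path only when the middle one is adjacent to both ends and the ends are at distance two, i.e.\ non-adjacent; as every cross pair is adjacent, non-adjacent pairs live inside a single factor. I would first record the resulting dichotomy: a general position set $S$ of $G\vee H$ that meets both $V(G)$ and $V(H)$ must be a clique of $G\vee H$ (an $H$-vertex is a common neighbour of any two non-adjacent $G$-vertices and vice versa, so each side of $S$ is a clique and all cross edges are present), whereas a general position set contained in a single factor induces there a disjoint union of cliques. This backbone controls every configuration that can occur.

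\textbf{Upper bound.} Assume without loss of generality that $G$ is not a clique and suppose some $\mob$-set uses $k\ge\omega(G)+\omega(H)$ robots. Whenever both factors are occupied the dichotomy forces a clique, so $k\le\omega(G)+\omega(H)$; hence $k=\omega(G)+\omega(H)$ and each such configuration is a maximum clique $A\cup B$ with $A,B$ maximum cliques of $G,H$. The crux is that such a configuration is \emph{frozen}: a robot at $a\in A$ can only move to a neighbour, but a neighbour in $V(H)$ would have to extend the maximum clique $B$, while a neighbour $a'$ in $G$ preserving general position would have to be adjacent to all of $A\setminus a$ and to $a$, hence to all of $A$, contradicting maximality; symmetrically for $B$. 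I would then rule out all robots lying in one factor: crossing from an all-$G$ configuration to a both-occupied one leaves $k-1=\omega(G)+\omega(H)-1$ robots forming a clique of $G$, forcing $\omega(H)\le 1$, a contradiction (symmetrically for $H$); thus the factors can never be exchanged, so both are always occupied, the configuration is always a frozen maximum clique, no move is ever possible, and only the $k<|V(G\vee H)|$ starting vertices are visited --- a contradiction.

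\textbf{Lower bound.} Put $m=\min\{\omega(G),\omega(H)\}$ and, without loss of generality, $\omega(G)=m$. I would exhibit a mobile set of size $m+1$ consisting of a maximum clique $A$ of $G$ (lying in a component $G_1$) together with one free robot, keeping the robots at all times on a clique of $G\vee H$ or on a disjoint union of two cliques (both are general position sets). Every vertex $z\notin V(G_1)$, and every vertex of $G_1$ non-adjacent to $A$, is visited by bringing the free robot there from $H$, since $A\cup\{z\}$ is then a disjoint union of two cliques. Every vertex of $H$, and every vertex of $G_1$ adjacent to part of $A$, is reached after \emph{migrating} $A$ into a maximum clique of $H$ one robot at a time --- each intermediate set is a clique, hence the move is legal --- after which a single roamer in $G_1$ (respectively in $H$) sweeps the relevant component while the parked clique keeps every configuration a clique. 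The delicate point, which I expect to be the main obstacle, is traversing a \emph{disconnected} factor, where the roamer must change components by leaving its factor and therefore needs a stepping-stone vertex non-adjacent to the parked clique; I would resolve this by observing that a clique lies in a single component, so the other components automatically supply such stepping stones, while connected factors require no switching. Most of the care lies in the bookkeeping that vertices adjacent to the hosted maximum clique are all reached through the mode switch to the opposite factor.

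\textbf{The apex case.} For $G\vee K_1$ both bounds are quick. At the instant the apex is occupied, the dichotomy forces the remaining $G$-robots to form a clique, so at most $\omega(G)+1$ robots are present; since the apex must be visited and the number of robots is constant, $\mob(G\vee K_1)\le\omega(G)+1$. For the lower bound, $G\vee K_1$ is connected with at least two vertices, and two robots are always in general position, so a standard traversal with two robots visits every vertex, giving $\mob(G\vee K_1)\ge 2$.
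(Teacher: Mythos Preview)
Your approach is the paper's: the same structural dichotomy (a general position set meeting both sides of the join is a clique, one confined to a single side is a disjoint union of cliques there), the same frozen-maximum-clique argument for the upper bound, and the same park--roam--migrate scheme for the lower bound. Your upper bound is in fact argued more carefully than the paper's one-sentence version, since you explicitly exclude configurations confined to a single factor before invoking the frozen-clique step. The apex case is fine.

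There is, however, a gap in your lower-bound treatment of disconnected factors. Your resolution (``a clique lies in a single component, so the other components automatically supply such stepping stones'') only applies when the \emph{parked} factor is disconnected. If the parked factor $G$ is connected and the chosen maximum clique $A$ happens to dominate $G$ --- take $G=P_3$ with $A$ either edge --- while $H$ is disconnected, say $H=2K_2$, then no vertex of $V(G)\setminus A$ is non-adjacent to $A$, so the free robot has no stepping stone in $G$ and cannot change components of $H$ during your first phase. (The paper's proof is equally silent here: it simply asserts that the lone roamer ``can visit every vertex of $G$'' with the configuration ``always a clique'', which tacitly assumes the roamed factor is connected.) The repair is to order the phases differently: let the free robot first sweep only the component $H_1$ of $H$ containing the target $m$-clique $B$ (this needs no switching), then migrate $A$ into $B\subseteq H_1$; now the free robot can roam all of $G$ (configurations $B\cup\{g\}$ are cliques) and every other component of $H$ (configurations $B\cup\{h'\}$ with $h'\notin V(H_1)$ are disjoint unions of cliques), passing freely between them via cross-edges.
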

	
	\begin{proof}
		Assume that both $G$ and $H$ have clique number at least two and that at least two robots are traversing $G \vee H$ in general position. At some point there must be a robot in $G$ and a robot in $H$. Hence, at this point, the set of occupied vertices in $G$ and the occupied vertices in $H$ must both be cliques in $G$ and $H$, respectively, giving the upper bound $\mob (G \vee H) \leq \omega (G) + \omega (H)$. However, if $\mob (G \vee H) = \omega (G) + \omega (H)$, then no robot has a legal move, since any move would result in a clique in one of $G$ and $H$ and a non-clique in the other, so in fact $\mob (G \vee H) \leq \omega (G) + \omega (H)-1$.
		
		For the lower bound, assume that $\omega (H) \leq \omega (G)$. We can start with robots at a maximum clique $W_H$ of $H$ and one robot in $G$. The robot in $G$ can visit every vertex of $G$, since during this process the occupied vertices always form a clique in $G \vee H$. At the end, this robot moves into a maximum clique $W_G$ of $G$. After that, all the robots from $W_H$ but one move into $W_G$. At that time, only one robot remains in $H$ and, by the same argument, it can visit every vertex of $H$. 
		
		The inequalities for $G \vee K_1$ can be derived in a similar manner.
	\end{proof}
	
	Note that if $G$ and $H$ both have clique number two, then the upper and lower bounds of Theorem~\ref{thm:join} coincide. For a triangle-free graph $G$, $\mob (G \vee K_1)$ could be either two or three. It is easily seen that for cycles we have $\mob (C_4 \vee K_1) = 2$ and $\mob (C_n \vee K_1) = 3$ for $n \geq 5$. The first example shows that the lower bound for $\mob (G \vee K_1)$ is tight, but it is an open question whether this can happen for graphs with large clique number. 
	
	\begin{corollary}
		If $G$ and $H$ both have clique number two, then $\mob (G \vee H) = 3$.
	\end{corollary}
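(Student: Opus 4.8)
The plan is to read this off directly from Theorem~\ref{thm:join}. Recall the standing assumption of this subsection that at least one of $G$ and $H$ is not a clique; since a graph of clique number two is a clique only when it equals $K_2$, this assumption guarantees that $G$ and $H$ are not both $K_2$, and hence the hypotheses of Theorem~\ref{thm:join} are met. (Were the excluded case permitted, we would instead have $G \vee H = K_4$ with $\mob(K_4) = 4$, so the assumption is genuinely needed.)

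First I would substitute $\omega(G) = \omega(H) = 2$ into the two-sided bound of Theorem~\ref{thm:join}. The lower bound becomes $\min\{\omega(G),\omega(H)\} + 1 = 2 + 1 = 3$, while the upper bound becomes $\omega(G) + \omega(H) - 1 = 2 + 2 - 1 = 3$. The two bounds thus coincide and sandwich $\mob(G \vee H)$ between $3$ and $3$, forcing $\mob(G \vee H) = 3$.

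There is no genuine obstacle here, as all the substantive work has already been carried out in the proof of Theorem~\ref{thm:join}: its lower bound is supplied by the explicit traversal in which a single robot sweeps out one factor while a maximum clique of the other factor is occupied (and then the roles are reversed), and its upper bound follows from the observation that filling a maximum clique in both factors simultaneously leaves no legal move available. The corollary merely records the numerical coincidence of these two bounds in the special case where each factor has clique number exactly two.
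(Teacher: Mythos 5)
Your proposal is correct and matches the paper's own (implicit) argument exactly: the paper derives this corollary simply by observing that with $\omega(G)=\omega(H)=2$ the lower bound $\min\{\omega(G),\omega(H)\}+1$ and the upper bound $\omega(G)+\omega(H)-1$ of Theorem~\ref{thm:join} both equal $3$. Your additional remark that the standing assumption (that $G$ and $H$ are not both cliques, i.e.\ not both $K_2$) is genuinely needed to exclude $G\vee H=K_4$ is a correct and worthwhile observation.
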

	
	To show that the upper bound in Theorem~\ref{thm:join} is tight, consider the join $K_r^- \vee K_s ^-$, where $K_n^-$ represents a complete graph minus one edge. If $x_1,x_2$ is the pair of non-adjacent vertices in $K_r^-$ and $y_1,y_2$ is the pair of non-adjacent vertices of $K_s^-$, then $(V(K_r^-) \setminus \{ x_2\} ) \cup (V(K_s^-)\setminus \{ y_1,y_2\} )$ is a mobile general position set, as the set of occupied vertices forms a clique in $K_r^- \vee K_s^-$ and the robot at $x_1$ can follow the route $x_1 \move y_1 \move x_2 \move y_2$ to visit the remaining vertices. This matches the upper bound. More generally, the same argument works when $G$ and $H$ are both joins of cliques with empty graphs.
	
	To demonstrate sharpness of the lower bound, for $r \geq 2$ take the join $K_r \vee K^+_{r+1}$, where $K^+_{r+1}$ is the complete graph $K_{r+1}$ with an added leaf $x$. Suppose that a set of at least $r+2$ robots can traverse this graph in general position, and focus on the moment that there is a robot at $x$. As $r+2$ robots cannot be stationed on $K_{r+1}^+$, there must be a robot on $K_r$ and the positions occupied on $K_r$ and $K_{r+1}^+$ must both induce cliques. Hence every vertex of $K_r$ must contain a robot and in $K_{r+1}^+$ there is a robot at $x$ and its support vertex $x^{\prime }$. However, there are no legal moves in this configuration. Thus, $\mob(K_r \vee K^+_{r+1}) \leq r+1 = \min \{ \omega(K_r),\omega (K^+_{r+1})\} +1$.
	
	For an example of graphs with arbitrarily large clique number that meet the upper bound in $\mob (G \vee K_1) \leq \omega (G)+1$, consider the \emph{birdcage graph} $B_n$ formed as follows. Let $U$ be a clique on vertices $\{ u_1,\dots ,u_n\}$, $V$ be an empty graph on vertices $\{ v_1,\dots ,v_n\} $ and an additional vertex $z$, and add edges $u_i \sim v_i$ and $v_i \sim z$ for $i \in [n]$. The graph $B_n$ has clique number $n$ and we show that $n+1$ robots can traverse $B_n \vee K_1$ in general position. Denote the vertex of the $K_1$ by $x$. Start with robots at $U \cup \{x \} $ and make the move $x \move z$. Then for each $i \in [n]$ make the two moves $u_i \move v_i \move u_i$. It is easily seen that the robots are always in general position and visit all the vertices of $B_n \vee K_1$.

	\section{Concluding remarks and open problems}
	\label{sec:conclude}
	
	We conclude with a few open problems.
	
	\begin{itemize}
		\item Is there a non-trivial upper bound on $\mob(G\cp H)$, at least for the particular case $\mob(G\cp K_2)$?
		\item We have seen that $\mob(P_\infty\cp P_\infty)=4=\gp(P_\infty\cp P_\infty)$. In~\cite[Theorem 1]{KlavzarRus} it was proven that $\gp(P_\infty^{k,\Box})= 2^{2^{k-1}}$, where $P_\infty^{k,\Box}$ is the $k$-tuple Cartesian product of the infinite path $P_\infty$. It is therefore of interest to determine whether $\mob(P_\infty^{k,\Box})= 2^{2^{k-1}}$ holds for larger values of $k$.
		\item Are there graphs $G$ with arbitrarily large clique number such that $\mob (G \vee K_1) = 2$?
		\item In view of Proposition~\ref{prop:cylinders} and the preceding remarks, we ask what is the mobile general position number of cylinder graphs $C_7 \cp P_s$ and $C_{10} \cp P_s$ for $s \geq 5$?
		\item By Corollary~\ref{cor:Cr cp Cs}, $\mob (C_r \cp C_s) \geq 5$ if $r = 9$ or $r > 10$ and $s \geq 10$. It would be interesting to classify the mobile general position numbers of all torus graphs. 
		\item What is the mobile general position number of strong and direct products?
		\item What is the mobile general position number of the hypercube?

	\end{itemize}
	
	\section*{Acknowledgments}
	
	Sandi Klav\v zar was supported by the Slovenian Research and Innovation Agency (ARIS) under the grants P1-0297, N1-0355, and N1-0285. D. Kuziak and I. G. Yero have been partially supported by the Spanish Ministry of Science and Innovation through the grant PID2023-146643NB-I00. D.\ Kuziak acknowledges the support from ``Ministerio de Educaci\'on, Cultura y Deporte'', Spain, under the ``Jos\'e Castillejo'' program for young researchers (reference number: CAS22/00081) to make a temporary visit to the University of Ljubljana, where this investigation has been partially developed. The research of James Tuite was partly carried out during a visit to University of Ljubljana supported by a grant from the Crowther Fund. Ethan Shallcross conducted this research with funding from an Open University research bursary. The authors thank the anonymous referees for their suggestions.
	
	\section*{Conflicts of interest}
	Sandi Klav\v{z}ar is an Associate Editor of the Discrete Applied Mathematics journal and was not involved in the review and decision-making process of this article. The authors do not have any additional conflicts of interest to declare.

\end{document}